\numberwithin{equation}{section}
\newtheorem {proposition} {Proposition}[section]
\newtheorem {theorem}     [proposition]{Theorem}
\newtheorem {corollary}   [proposition]{Corollary}
\newtheorem {lemma}       [proposition]{Lemma}
\newcommand{\supp}{{\rm supp}}
\begin{document}
\setlength{\parindent}{4ex} \setlength{\parskip}{1ex}
\setlength{\oddsidemargin}{12mm} \setlength{\evensidemargin}{9mm}
\title{{Well-posedness of a 3\hskip.02cm{D} Parabolic-hyperbolic Keller-Segel System in the Sobolev Space Framework}}

\author{Chao Deng, \quad\quad Tong Li}
\date{}

\maketitle
\begin{abstract}
We study the global strong solutions to a 3-dimensional
parabolic-hyperbolic Keller-Segel model with initial data close to a
stable equilibrium with perturbations belonging to $L^2(\mathbb
R^3)\times H^1(\mathbb{R}^3)$. We obtain global well-posedness and
decay property. Furthermore, if the mean value of initial cell
density is smaller than a suitabale constant, then the chemical
concentration decays exponentially to zero as $t$ goes to infinity.
Proofs of the main results are based on an application of Fourier
analysis method to uniform estimates for a linearized
parabolic-hyperbolic system and also based on the smoothing effect
of the cell density as well as the damping effect of the chemical
concentration.

\end{abstract}

\noindent {\bf Keywords:} Keller-Seger system;  Sobolev space
framework;  parabolic-hyperbolic system;  well-posedness;
 Fourier analysis method.

\noindent {\bf Mathematics Subject Classification: \,} 35Q92, 35B40, 35G55, 
 92C17
\maketitle
\section{Introduction}
 In this paper, we study the following normalized 3-dimensional chemotaxis model
\begin{align}\label{eq:1.1}
  \left\{\begin{aligned}
  &\partial_tu=\Delta u +\nabla\cdot(u\nabla\ln v),\ \ \ \ \ \ \ \ \  \\
  &\partial_tv=uv-\mu v\end{aligned}\right.
 \end{align}
 for $t>0$ and $x\in \mathbb{R}^3$, where $u(x,t)$, $v(x,t)$ denote the cell density and the chemical concentration, respectively.
 System \eqref{eq:1.1} was proposed by Othmer and Stevens \cite{O-S}
 to describe the chemotactic movement of particles where  the chemicals are non-diffusible and can modify the local environment for succeeding
 passages. For example, myxobacteria produce slime over which their cohorts can move more readily and ants can follow trails left by
 predecessors \cite{E}. One direct application of \eqref{eq:1.1} is to model haptotaxis where cells move towards an increasing
 concentration of immobilized signals such as surface or matrix-bound adhesive molecules.

 With no loss of generality, by setting $w=\mu{t}+\ln{v}$ 
 in \eqref{eq:1.1}, we get
 \begin{align}\label{eq:1.2}\left\{
  \begin{aligned}
  &\partial_tu=\Delta u+\nabla\cdot(u\nabla{w}),\\
  & \partial_t {w}=u,\\
  & (u,w)|_{t=0}=(u_0,w_0)
  \end{aligned}\right.\hskip1.18cm
  \end{align}
 for $t>0$ and $x\in \mathbb{R}^3$. 
 System \eqref{eq:1.2} was studied in \cite{W-H}
 in one-dimensional case and was extended to
 multidimensional cases in \cite{L-Z,LiPanZhao:2012}. It was studied in \cite{O-S}
 and a comprehensive qualitative and numerical analysis was provided there. We refer readers to Refs.
 \cite{CheminLerner:1992314, C-P-Z1,C-P-Z,E,Horstman,KS,KS1,KS2,L-S,Likun_prep,L-Z,L-W,L-W1,L-W2,N-I,
 Patlak,S-W-W,W-H,Y-C-L,Y-C-L-S,Z-Z} for more discussions in this direction.
   Recently, in \cite{L-Z}, the local and global existence
 of the classical solution to \eqref{eq:1.2} was studied when initial
 data $(u_0-\bar{u}, \nabla{w}_0)\in H^{\frac{5}{2}+}(\mathbb R^3)\times{H^{\frac{5}{2}+}}(\mathbb R^3)$ with $\bar{u}$ being the mean value of $u_0$ ($s=\frac{5}{2}+$ stands for
 $s>\!\frac{5}{2}$ and similar conventions are applied throughout this
 paper).
 Later on, Hao \cite{Hao:2012} studied global existence and uniqueness of global mild solution for initial data
  close to some constant state in critical Besov space with minimal regularity
 where the proof is in the Chemin-Lerner space framework which was introduced by Chemin and
 Lerner \cite{CheminLerner:1992314} and aferwards developed in a series of works (see e.g. \cite{Danchin:2000579}).

 Noticing that the Cauchy problem of system \eqref{eq:1.2} is invariant under the following scaling 
 transformations
 $$\Big(u(t,x),\; (\nabla{w})(t,x)\Big)\rightarrow \Big(\lambda^2 u(\lambda^2t,\lambda x),\;\lambda({\nabla}w)(\lambda^2t,\lambda{x})\Big)$$
 and
  $$\Big(u_0(x),\; ({\nabla}w_0)(x)\Big)\rightarrow \Big(\lambda^2 u_0(\lambda x),\;\lambda({\nabla}w_0)(\lambda{x})\Big).$$
  The idea of using a functional setting invariant by the scaling is now classical and was originated from many
  works (see e.g. \cite{Cannone:1995}).
  It is clear that the critical Sobolev space for $(u_0,\nabla{w}_0)$ is
  $\dot{H}^{\hskip.01cm-\hskip.01cm\frac{1}{2}}(\mathbb R^3)\times \dot{H}^{\frac{1}{2}}(\mathbb R^3)$ and,
  correspondingly, $\dot{H}^{s-2}(\mathbb R^3)\times\dot{H}^{s-1}(\mathbb R^3)$ ($s>\frac{3}{2}$)
  is the subcritical Sobolev space.

  As for the critical case, it seems to be difficult to prove global existence of mild solution to system \eqref{eq:1.2}
  with $(u_0,\nabla{w}_0)\in\dot{H}^{-\frac{1}{2}}(\mathbb R^3)\times
 \dot{H}^{\frac{1}{2}}(\mathbb R^3)$ due to the invalidity of $\dot{H}^{\frac{3}{2}}(\mathbb R^3)\!\hookrightarrow\!L^\infty(\mathbb R^3)$.
 Thus a suitably smaller initial data space$--$the hybrid Besov space
 $\dot{B}^{-\frac{1}{2}}_{2,1}(\mathbb{R}^3)\times(\dot{B}^{\frac{1}{2}}_{2,1}(\mathbb{R}^3)\cap\dot{B}^{\frac{3}{2}}_{2,1}(\mathbb{R}^3))$
  and $\dot{B}^{\frac{3}{2}}_{2,1}(\mathbb{R}^3)\!\hookrightarrow\! L^\infty(\mathbb{R}^3)$ were used in \cite{Hao:2012}.

 As for the subcritical case, we observe that $L^2(\mathbb R^3)$ function $(1+|x|^2)^{\hskip-.01cm-\hskip-.01cm1}$ neither
 belongs to $\dot{H}^{\hskip-.01cm-\hskip-.01cm\frac{1}{2}}(\mathbb R^3)$ nor to
 $\dot{B}^{\hskip-.02cm-\hskip-.01cm\frac{1}{2}}_{2,1}(\mathbb R^3)$. Hence the case of $(u_0,\nabla{w}_0)\in\! L^2(\mathbb R^3)\times H^1(\mathbb R^3)$
 can not be treated directly by applying results of the critical case, cf. \cite{Hao:2012}.
 We believe that the Chemin-Lerner space framework can be modified
 slightly to handle the subcritical cases. However, we do not proceed to this way but consider well-posedness of mild
 solution in the $L^2(\mathbb R^3)\times H^1(\mathbb R^3)$ framework and Fourier multiplier theory
 provides us with another option. Recalling the well known weak solution theory for
 heat equation, we observe that searching a solution $u$ in $C([0,\infty);L^2(\mathbb R^3))\cap L^2(0,\infty;\dot{H}^1(\mathbb R^3))$
 is also important to understand \eqref{eq:1.2}. Meanwhile, assuming  $\nabla{w}_0\in H^1(\mathbb R^3)$ is convenient to study the decay property of $w$.

 Following similar energy arguments as in \cite{L-Z}, one can decrease the indices of
 the solution space $H^{s}(\mathbb R^3)$ from $s=\frac{5}{2}+$ to $s=2$ or even
 to $s=\frac{3}{2}+$, where $s=\frac{3}{2}$ seems to be unreachable for energy arguments.
 Indeed, by multiplying \eqref{eq:1.2} by some proper terms of $u$ and $w$, then integrating by parts, we get
\begin{align*}
   &\frac{d}{dt}\Big(\|u\|_{L^2}^2\!+\!\|\nabla w\|_{L^2}^2\!+\!\|\Lambda^{{\frac{3}{2}+}}u\|_{L^2}^2\!+\!\|\Lambda^{{\frac{3}{2}+}}\nabla{w}\|_{L^2}^2\Big)
      \!\le\!-2\|{\nabla}u\|_{L^2}^2+\|\nabla{(u^2)}\cdot\nabla{w}\|_{L^1}\nonumber\\
         &\;\;+\!2\|\nabla{w}\cdot\nabla{u}\|_{L^1}
        -\!2\|\Lambda^{{\frac{5}{2}+}}u\|_{L^2}^2+\|\nabla\Lambda^{{\frac{3}{2}+}}\!p\cdot\!\Lambda^{\frac{3}{2}+}(pq)\|_{L^1}\!+\!
        \|\nabla\Lambda^{\frac{3}{2}+}\!{u}\cdot\!\nabla\Lambda^{\frac{3}{2}+}w\|_{L^1}.
         \end{align*}
 Applying H\"older's inequality and $H^{\frac{3}{2}+}(\mathbb{R}^3)\!\hookrightarrow\!L^{\!\infty}(\mathbb{R}^3)$  to the above inequality
  and following similar arguments of the proof of Theorem 1.1 in \cite{L-Z}, we get
   \begin{align*}
     \frac{d}{dt}(\|u\|_{H^{\frac{3}{2}+}}^2+\|\nabla w\|_{H^{\frac{3}{2}+}}^2+1)\lesssim -\|\nabla{u}\|_{H^{\frac{3}{2}+}}^2
       +(\|u\|_{H^{\frac{3}{2}+}}^2+\|\nabla{w}\|_{H^{\frac{3}{2}+}}^2+1)^2. 
         \end{align*}
          Then a simple Gronwall argument yields local
       well-posedness of \eqref{eq:1.2} for $(u_0,\nabla w_0)\in H^{\frac{3}{2}+}(\mathbb{R}^3)\times
        H^{\frac{3}{2}+}(\mathbb{R}^3)$. The methods used in the present article serve as a supplement to the energy method.

 Meanwhile, in system \eqref{eq:1.2}, one needs to consider two major terms
 $\Delta{u}$ and $u\Delta{w}$. It suffices to assume that all the second derivatives of $u$ and
 $w$ exist almost everywhere, although maybe certain higher
 derivatives will not exist.
 \noindent Consequently, we also expect to establish
 well-posedness of 
 such solution to system \eqref{eq:1.2} with initial data $(u_0,\nabla{w}_0)\in H^2(\mathbb R^3)\times
 H^1(\mathbb R^3)$. Precisely, we will show that the Cauchy problem
 of system \eqref{eq:1.1} has a unique solution $(u-\bar{u},\nabla((\mu-\bar{u})t+\ln v))$ in
 $C([0,\infty);H^2(\mathbb R^3))\times C([0,\infty);H^1(\mathbb R^3))$ provided that the initial data $(u_0,\nabla\ln v_0)$ is close to some constant equilibrium
 state $(\bar{u}, 0)$ and the difference $(u_0-\bar{u},\nabla\ln{v}_0)$ belongs to $\in{H}^2(\mathbb R^3)\times{H}^1(\mathbb R^3)$ ($\bar{u}$ is defined
 in \eqref{eq:1.3}).

  In the 4-dimensional case, scaling invariant
  discussion suggests that the initial data space $L^2(\mathbb R^4)\times H^1(\mathbb R^4)$ is critical.
  It is an interesting {\it question} whether
  the 4D model \eqref{eq:1.2} has a solution even locally in time with $(u_0,\nabla w_0)\in L^2(\mathbb R^4)
  \times{H}^1(\mathbb R^4)$.

 By modifying the definition of the mean value of $u$ in bounded domains, we define
 \begin{align}\label{eq:1.3}
    \bar{u} =
    \lim_{R\rightarrow\infty}\frac{1}{|B_R|}\int_{B_R}u_0(x)dx,\hskip1cm
 \end{align}
 where $B_R\subset \mathbb{R}^3$ is a ball centered at the origin with radius
 $R$ and $u_0$ is the initial cell density. Applying $p=u-\bar{u}$,
 $h=(\mu-\bar{u})t+\ln{v}$ and $\bar{u}=1$  to \eqref{eq:1.1}, we get
 \begin{align*}
 \left\{\begin{aligned}
 &\partial_tp=\Delta p + \Delta h + \nabla\cdot (p\nabla h),\\
 &\partial_th=p.\end{aligned}\right.\hskip.81cm
 \end{align*}

 It is easy to check that for any positive constant $c$, if $(p,h)$ is a solution to the above system,
 then $(p,h+\ln c)$ is also a solution. Or equivalently, if
 $(u,v)$ is a solution to system \eqref{eq:1.1}, then $(u,cv)$ is also a
 solution to system \eqref{eq:1.1}.
 It is natural to think
 $\nabla{h}$ as a new unknown function whence $\nabla{h}$ is
 uniquely determined.
 Setting $\Lambda=\sqrt{-\Delta}$,
 $q=-\Lambda h$ and $G=\Lambda^{-1}\nabla\cdot(p\nabla\Lambda^{-1}q)$, we
 obtain the following model
 \begin{align}\label{eq:1.4}
  \left\{
  \begin{aligned}
  &\partial_tp=\Delta p + \Lambda q - \Lambda G,\ \ \ \\
  &\partial_tq=-\Lambda p\ \
  \end{aligned}
  \right.\hskip1.59cm
 \end{align}
 for $t>0$ and $x\in \mathbb{R}^3$.

 In this paper, we study system \eqref{eq:1.4} with initial data
 $(p_0(x), q_0(x))\in H^k(\mathbb R^3)\times{H}^1(\mathbb R^3)$ ($k=0,2$).
 More precisely, we prove the global well-posedness of system \eqref{eq:1.4} with small
 initial data satisfying $(p_0,q_0)\in H^k(\mathbb R^3)\times{H}^1(\mathbb R^3)$
 ($k=0,2$, see Theorems \ref{thm:1.1} and \ref{thm:1.2}). The main tools are Fourier transformation theory and the smoothing properties of
 parabolic-hyperbolic coupled systems (see inequalities \eqref{eq:m1}--\eqref{eq:m3} below for details). Particularly,
 from \eqref{eq3,6} and \eqref{eq:m1} as well as definition of $m_1(t,\xi)$ for $|\xi|>2$ in ($M$), we observe
 that if $|\xi|>4$, then
 \begin{align}\label{smoothing-1}
 m_1(t,\xi)=\frac{e^{-\frac{t(1+\Xi)|\xi|^2}{2}}}{\frac{2\Xi}{\Xi+1}}-\frac{e^{-\frac{2t}{1+\Xi}}}{\frac{\Xi(\Xi+1)|\xi|^2}{2}}
 \quad\text{ with }\;
 \Xi=\sqrt{1-\frac{4}{|\xi|^2}}\in(\frac{\sqrt{3}}{2},1).
 \end{align}
 Considering the smoothing effects, we need to study $\partial_t^k\partial^\alpha
 m_1(t,D)$ with symbol
 \begin{align}\label{smoothing-2}
 \partial_t^{k}\xi^\alpha
 m_1(t,\xi)=-\frac{(1\!+\!\Xi)^{k+1}}{{(-2)^{k+1}\Xi}}|\xi|^{2k}\xi^\alpha{e^{-\frac{t(1+\Xi)|\xi|^2}{2}}}+
   \frac{(-2)^{k+1}}{{\Xi(\Xi+1)^{k+1}}}{\xi^\alpha}{|\xi|^{-2}}{e^{-\frac{2t}{1+\Xi}}},
   \end{align}
  $\xi^\alpha=\xi_1^{\alpha_1}\xi_2^{\alpha_2}\xi_3^{\alpha_3}$,
 $\alpha=(\alpha_1,\alpha_2,\alpha_3)\in\mathbb{N}^3$,
 $k\in\mathbb{N}$ and $|\alpha|=\alpha_1+\alpha_2+\alpha_3\le2$. Indeed, for
 any $p_0\in L^2\!,$ $t>0$ and $|\xi|>4$, from \eqref{smoothing-1} and
 \eqref{smoothing-2} we have the following smoothing property
 \begin{align}\label{smoothing-3}
 \|\partial_t^k\partial^\alpha m_1(t,D)p_0\|_{L^2}\le
 \Big(C_1(k)\,t^{-\frac{|\alpha|}{2}-k}+C_2(k)\,
 e^{-t}\Big)\|p_0\|_{L^2}.
 \end{align}
 However, following similar arguments as in
 \eqref{smoothing-1}--\eqref{smoothing-3}, if $t>0$, $|\xi|>4$ and
 $q_0\in H^1$, then from \eqref{eq3,6}, ($M$) and \eqref{eq:m1} we can only get
 \begin{align}\label{smooting-4}
   \|\partial_t^k m_2(t,D) q_0\|_{H^1}\le C_3(k)\|q_0\|_{H^1},
 \end{align}
 where no smoothing effect exists for spatial variable. Considering the
 low frequency piece, smoothing properties of $m_1(t,D)$ and
 $m_2(t,D)(-\Delta)^{-\frac{1}{2}}$ are similar to $e^{t\Delta}$,
 hence it is omitted. 
 In some cases, this special coupled system with such smoothing
 effects is also called weak dissipative structure, see for instance \cite{Lei:201013}. For various aspects of the smoothing
 properties, we refer the readers to see, for instance \cite{ChenZhang:20061793,Danchin:2000579}
 and the references therein.
 The proof here is based on a combination of the Fourier transform and estimates of the eigenvalues of the corresponding characteristic matrix
 (see \eqref{eq3,1}--\eqref{eq:m3} below for details). The different decay properties of the eigenvalues of the characteristic matrix
 enable us to
 take advantages of the smoothing property of the {\it high} frequency piece\,\footnote{Definitions of the low, medium and high frequency pieces of a function are given by \eqref{eq:1.7}.}
  of $p$, i.e., $p\in
 L^1(0,\infty;\dot{H}^{{7}/{4}}_\psi)$ instead of that of $q$ since the high frequency piece of $q$ does not have spatial smoothing effect
 (see  \eqref{smooting-4} above).
 The introduced $L^1(0,\infty;\dot{H}^{{7}/{4}}_\psi)$ space is the {\it new} point of this article.
 The main difficulty is to estimate $\|p\nabla{q}\|_{L^1(0,\infty;L^2)}$, which forces us to use frequency
 decomposition or partition of unit
 and smoothing effect of the high frequency piece of $p$ (see Lemma
 \ref{lem:3.2} below). Once $\|p\nabla{q}\|_{L^1(0,\infty;L^2)}$ being estimated, the desired result follows from a standard fixed point argument.
 As for the decay property of $v$ in system \eqref{eq:1.1}, we apply
 the limiting case of the Sobolev inequality in $BMO$ (cf. for instance, \cite{KozonoTaniuchi:2000191}) to
 $v=c\,e^{(\bar{u}-\mu)t}e^{-\Lambda^{-1}q}$ hence obtain lower and upper bounds for its $L^\infty$ norm which are stated in
 \eqref{eq:1.14}--\eqref{eq:1.15}.

 Before stating the main results, we define the partition of unit. Let us briefly explain how
 it may be built in $\mathbb{R}^3$. Let $\mathcal{S}(\mathbb{R}^3)$ be the
 Schwarz class and $(\eta,\varphi,\psi)$ be three smooth radially symmetric functions valued in
 $[0,1]$ such that
  \begin{align}\label{eq:1.5}
  & \supp\;\psi \subset \{\xi\in\mathbb{R}^3;\; |\xi|> 2^4\},         & \supp\;\varphi \subset \{\xi\in\mathbb{R}^3;\; 1<|\xi|< 2^5\}, \\
  & \supp\;\eta \subset\{\xi\in\mathbb{R}^3;\;|\xi|< 2\},             & \eta(\xi)+\varphi(\xi)+\psi(\xi)=1,\ \forall\ \xi\!\in\!\mathbb{R}^3.
  \label{eq:1.6}
  \end{align}
 For $f\in\mathcal{S}'(\mathbb{R}^3)$, we define the low, medium and high frequency operators as follows:\footnote{
 $f^l=\eta(D)f=\mathcal{F}^{-1}(\eta(\xi)\widehat{f}(\xi))$ and similar conventions are applied throughout this paper.}
\begin{align}\label{eq:1.7}
  f^l=\eta(D){f},\hskip.4cm
  f^m=\varphi(D){f},\hskip.4cm
  f^h=\psi(D){f},\hskip.4cm
  \eta(D)\psi(D){f}\equiv0
\end{align}
 with $\eta(\xi),\varphi(\xi)$ and $\psi(\xi)$  being symbols of $\eta(D),\varphi(D)$ and $\psi(D)$, respectively.

 Throughout this paper,  $\mathcal{F}f$ and $\widehat{f}$ stand for Fourier
 transform of $f$ with respect to space variable and
 $\mathcal{F}^{-1}$ stands for the corresponding inverse Fourier transform. For any $s\ge0$ and any function $f$, we shall define the
 fractional Riesz potential $\Lambda^s$ and Bessel potential $\langle \Lambda \rangle^s:=(1-\Delta)^{\frac{s}{2}}$
 via
 \begin{align}\label{eq:1.8}
   \widehat{\Lambda^s f}(\xi)=|\xi|^s\widehat{f}(\xi) \ \text{ and }\   \widehat{\langle\Lambda\rangle^s f}(\xi)=\langle\xi\rangle^s\widehat{f}(\xi)=
   ({1+|\xi|^2})^\frac{s}{2}\widehat{f}(\xi),
 \end{align}
 respectively. $\|\cdot\|_{L^2}$, $\|\cdot\|_{L^\infty}$, $\|\cdot\|_{H^s}$ and $\|\cdot\|_{\dot{H}^s}$ denote the
 norms of the usual Lebesgue measurable function spaces $L^2$, $L^\infty$, the
 usual Bessel potential space
 \begin{align} \label{eq:1.9}
 H^s:=\{f\in\mathcal{S}'(\mathbb{R}^3);\ \|\langle \Lambda\rangle^sf\|_{L^2}<\infty \}
 \end{align} and Riesz potential space
 \begin{align}\label{eq:1.10}
 \dot{H^s}:=\{f\in\mathcal{S}'(\mathbb{R}^3);\ \|\Lambda^sf\|_{L^2}<\infty \},
 \end{align}
 respectively. Moreover, from \eqref{eq:1.9} and \eqref{eq:1.10}, we observe that for any $s>0$,
 there holds  $H^s=\dot{H}^s\cap L^2$.
 For simplicity, for any $s\in\mathbb{R}$, we define
 \begin{align}\label{eq:1.11}
 \dot{H}^{s}_{\psi}=\{f\in \mathcal{S}'(\mathbb{R}^3);\
 \|f\|_{\dot{H}^s_{\psi}}=\|\Lambda^s\psi(D)f\|_{L^2}=\|\Lambda^sf^h\|_{L^2}<\infty\},
 \end{align}
 where $\dot{H}^s_\psi$ itself is not a Banach space since from \eqref{eq:1.7} one can prove that for any $g\in\mathcal{S}(\mathbb{R}^3)$
 satisfying $\supp\, \widehat{g}\subset\{\xi\in\mathbb{R}^3;\;|\xi|<2^4\}$ and $f\in\dot{H}^s_{\psi}$, there holds
 $\|f\|_{\dot{H}^s_\psi}=\|f+g\|_{\dot{H}^s_{\psi}}$. Hence we need to introduce another Banach space $Z$ to get an intersection space $Z\cap\dot{H}^s_\psi$ which forms a Banach space.

 The function space $C([0,\infty);X)$ is equipped with norm $\|f\|_{L^\infty_tX}$, where $X$ stands for some Banach
 space. For any two quantities $A$ and $B$, we shall use the notation
 $A\lesssim B$ when $A \le CB$ for some 
 positive constant $C$. The dependence of $C$ on
 various parameters is usually clear from the context. $A\sim B$
 if and only if $A\lesssim B$ and $B\lesssim A$. For any $1\le{\rho}, r\le\infty$, we denote
 $L^\rho(0,\infty)$ and $L^\rho(0,\infty;L^r)$ by
  $L^\rho_t$ and  $L^\rho_tL^r$, respectively.

 We state the main results as follows.
 \begin{theorem}\label{thm:1.1}
  For any initial data $(p_0,{q}_0)\in L^2(\mathbb R^3)\times {H}^1(\mathbb R^3)$,
   there exist positive constants $C$ and $\varepsilon_0$ such that if $\|(p_0,  {q}_0)\|_{L^2\times H^1}\le\varepsilon_0$, then
   system \eqref{eq:1.4} has a unique global solution $\,\,(p,q)\in{C}([0,\infty);L^2)\times C([0,\infty);H^1)\,\,$ satisfying
  \begin{align*}
  \|(p,\,q)\|_{L^\infty_tL^2\times{L^\infty_tH^1}}+\|(\nabla{p},\nabla{q})\|_{L^2_tL^2\times{L^2_tL^2}}+\|p\|_{L^1_t\dot{H}^{\frac{7}{4}}_\psi}\le
   {C}\,\varepsilon_0.
   \end{align*}
  \end{theorem}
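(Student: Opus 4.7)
The plan is to run a contraction mapping argument on the mild formulation of \eqref{eq:1.4}. Using the characteristic matrix $(M)$ to define the linear semigroup $S(t)$, I write
\begin{align*}
(p,q)(t)=S(t)(p_0,q_0)+\int_0^tS(t-s)\bigl(-\Lambda G(s),0\bigr)\,ds,\quad G=\Lambda^{-1}\nabla\cdot(p\nabla\Lambda^{-1}q),
\end{align*}
and look for a fixed point in the ball of radius $C\varepsilon_0$ in
\begin{align*}
X:=\Bigl\{(p,q):\,\|(p,q)\|_X:=\|p\|_{L^\infty_tL^2}+\|q\|_{L^\infty_tH^1}+\|\nabla p\|_{L^2_tL^2}+\|\nabla q\|_{L^2_tL^2}+\|p\|_{L^1_t\dot{H}^{7/4}_\psi}<\infty\Bigr\}.
\end{align*}
The first four seminorms encode the parabolic regularization of $p$ and the damping the coupling transfers to $q$. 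The last seminorm is tailored to the high-frequency smoothing of $p$ revealed by \eqref{smoothing-1}--\eqref{smoothing-3}, and is the \emph{new} point of the argument.

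First I would establish the linear bound $\|S(\cdot)(p_0,q_0)\|_X\lesssim\|(p_0,q_0)\|_{L^2\times H^1}$ block by block via the partition $\eta+\varphi+\psi\equiv1$ from \eqref{eq:1.5}--\eqref{eq:1.7}. On the low-frequency block $\eta(D)$ the symbols $m_1$ and $m_2\Lambda^{-1}$ behave like the heat semigroup, giving $L^\infty_tL^2\cap L^2_t\dot{H}^1$ bounds by Plancherel. The medium block $\varphi(D)$ is compactly supported and causes no trouble. On the high-frequency block $\psi(D)$, the explicit formula \eqref{smoothing-1} exhibits both a heat-type eigenvalue $e^{-t|\xi|^2(1+\Xi)/2}$ and an exponentially damping eigenvalue of size $e^{-2t/(1+\Xi)}$; substituting into \eqref{smoothing-3} with $|\alpha|\le2$ and integrating in time yields the $L^1_t\dot{H}^{7/4}_\psi$ bound for $p$, while \eqref{smooting-4} suffices for the $L^\infty_tH^1$ and $L^2_tL^2$ controls on $q$. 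The same Fourier computation furnishes a maximal-regularity-type estimate on the Duhamel integral, reducing the problem to controlling the nonlinear source in $L^1_tL^2$.

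The heart of the proof is therefore the bilinear inequality
\begin{align*}
\|\Lambda G\|_{L^1_tL^2}=\|\nabla\cdot(p\nabla\Lambda^{-1}q)\|_{L^1_tL^2}\lesssim\|(p,q)\|_X^2,
\end{align*}
and this is where I anticipate the main obstacle. Expanding Leibniz, $\nabla\cdot(p\nabla\Lambda^{-1}q)=\nabla p\cdot\nabla\Lambda^{-1}q-p\,\Lambda q$; the first piece is benign because $\nabla p\in L^2_tL^2$, the Riesz transform $\nabla\Lambda^{-1}$ is $L^2$-bounded, and a time-H\"older closes it. The difficult term is $p\,\Lambda q$: since $q$ carries no spatial smoothing beyond $L^\infty_tH^1$, no decomposition of $q$ can help, and the only flexibility is to split $p=p^l+p^m+p^h$ via \eqref{eq:1.7}. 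For the band-limited pieces $p^l$ and $p^m$, Bernstein combined with the parabolic smoothing $\nabla p\in L^2_tL^2$ and interpolation against $\|p\|_{L^\infty_tL^2}$ produces the required bound (this is the content of Lemma \ref{lem:3.2}). For the high-frequency piece $p^h$, Fourier support in $|\xi|>2^4$ together with $\tfrac{7}{4}>\tfrac{3}{2}$ gives the Bernstein embedding $\|p^h\|_{L^\infty}\lesssim\|p^h\|_{\dot{H}^{7/4}_\psi}$, whence
\begin{align*}
\|p^h\,\Lambda q\|_{L^1_tL^2}\le\|p^h\|_{L^1_tL^\infty}\|\Lambda q\|_{L^\infty_tL^2}\lesssim\|p\|_{L^1_t\dot{H}^{7/4}_\psi}\|q\|_{L^\infty_tH^1}\lesssim\|(p,q)\|_X^2,
\end{align*}
which is exactly the role of the new norm. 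Combining the linear and bilinear estimates, for $\varepsilon_0$ sufficiently small the Duhamel map is a contraction on a small ball of $X$, producing the unique global solution asserted in the theorem; uniqueness in the full class $C([0,\infty);L^2)\times C([0,\infty);H^1)$ follows from a Gronwall argument on the difference of two solutions.
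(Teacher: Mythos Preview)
Your proposal follows the paper's proof essentially verbatim: the same resolution space $X$, the same mild formulation via the multipliers $m_1,m_2$, linear estimates obtained block by block from the partition \eqref{eq:1.7} (this is Proposition~\ref{pro:3.1}), and the same frequency split $p=p^l+p^m+p^h$ on the bilinear term, with the high-frequency contribution absorbed by $\|p\|_{L^1_t\dot H^{7/4}_\psi}$ exactly as in Lemma~\ref{lem:3.2}; the fixed-point step is then Lemma~\ref{lem:2.5}. One small difference is that the paper bounds the Duhamel term by both $\|G\|_{L^2_tL^2}$ and $\|G\|_{L^1_t\dot H^1}$, whereas you reduce everything to $\|\Lambda G\|_{L^1_tL^2}$; either packaging works.

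There is, however, one imprecision worth flagging. The term $\nabla p\cdot\nabla\Lambda^{-1}q$ is not as ``benign'' as you suggest: pairing $\nabla p\in L^2_tL^2$ against $\nabla\Lambda^{-1}q$ does not close, since you would need $\nabla\Lambda^{-1}q\in L^2_tL^\infty$, and in $\mathbb R^3$ neither $H^1\hookrightarrow L^\infty$ nor Riesz boundedness on $L^\infty$ is available. This term also requires the frequency decomposition of $p$: for the band-limited pieces Bernstein gives $\|\nabla p^{l}+\nabla p^{m}\|_{L^3}\lesssim\|\nabla p\|_{L^2}$, which pairs with $\|\nabla\Lambda^{-1}q\|_{L^6}\lesssim\|q\|_{\dot H^1}$ in $L^2_t\times L^2_t$; for the high-frequency piece, $\|\nabla p^h\|_{L^4}\lesssim\|p\|_{\dot H^{7/4}_\psi}$ pairs with $\|\nabla\Lambda^{-1}q\|_{L^4}\lesssim\|q\|_{H^1}$ in $L^1_t\times L^\infty_t$. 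This is exactly the mechanism you already invoke for $p\,\Lambda q$, so the fix is immediate and the overall argument goes through.
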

  \begin{theorem}\label{thm:1.2}
  For any initial data $(p_0,{q}_0)\in H^2(\mathbb R^3)\times {H}^1(\mathbb R^3)\!,$
  there exist positive constants $C$ and $\varepsilon_0$ such that
  if $\|(p_0,{q}_0)\|_{H^2\times H^1}\le\varepsilon_0$, then system
   \eqref{eq:1.4} has a unique global solution $\,\,(p,q)\in{C}([0,\infty);H^2)\times{C}([0,\infty);H^1)\,\,$
   satisfying
  \begin{align*}
  \|(p,q)\|_{L^{\!\infty}_tH^2\times{L^{\!\infty}_tH}^1}+\sup_{t>0}\,(1\!+t)^{\frac{1}{2}}\|(\nabla{p},\nabla{q})\|_{L^2\times L^2}+\sup_{t>0}\,(1\!+t)^{\frac{7}{8}}\|\Lambda^{\frac{7}{4}}p\|_{L^2}
  \le{ C}\varepsilon_0.
  \end{align*}
   \end{theorem}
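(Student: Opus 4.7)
The plan is to build on Theorem~\ref{thm:1.1}. Since
\[
\|(p_0,q_0)\|_{L^2\times H^1}\le\|(p_0,q_0)\|_{H^2\times H^1}\le\varepsilon_0,
\]
Theorem~\ref{thm:1.1} directly yields a unique global solution $(p,q)$ together with the baseline controls $\|(p,q)\|_{L^\infty_tL^2\times L^\infty_tH^1}+\|(\nabla p,\nabla q)\|_{L^2_tL^2}+\|p\|_{L^1_t\dot H^{7/4}_\psi}\lesssim\varepsilon_0$. It therefore remains to propagate the extra $H^2$ regularity of $p$ and to upgrade these norms into the pointwise-in-time decay rates $(1+t)^{-1/2}$ for $\nabla p,\nabla q$ and $(1+t)^{-7/8}$ for $\Lambda^{7/4}p$.

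To propagate $H^2$ regularity, I would apply $\Lambda^2$ to the first equation of \eqref{eq:1.4}, pair with $\Lambda^2 p$, and exploit the standard cancellation of the linear $\Lambda p$--$\Lambda q$ cross term to arrive at a differential inequality of the form
\[
\tfrac{d}{dt}\bigl(\|\Lambda^2 p\|_{L^2}^2+\|\Lambda q\|_{L^2}^2\bigr)+2\|\Lambda^3 p\|_{L^2}^2\lesssim \bigl|\langle\Lambda^2 G,\Lambda^3 p\rangle\bigr|.
\]
The right-hand side would be estimated via the fractional Leibniz (Kato--Ponce) rule together with $H^{3/2+}\hookrightarrow L^\infty$ in $\mathbb{R}^3$ and the controls from Theorem~\ref{thm:1.1}; a Gronwall/continuation argument then gives $\|(p,q)\|_{L^\infty_tH^2\times L^\infty_tH^1}\lesssim\varepsilon_0$.

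For the decay rates I would close a fixed-point estimate in a Banach space $X$ carrying the norm appearing on the left-hand side of the stated inequality. Writing the mild form $p=m_1(t,D)p_0+m_2(t,D)q_0+\int_0^t m_1(t-s,D)\Lambda G(s)\,ds$ (and its analogue for $q$) with $G=\Lambda^{-1}\nabla\cdot(p\nabla\Lambda^{-1}q)$, the linear contribution yields the desired rates directly: on low frequencies the symbols $m_1,m_2$ reduce to a heat-type kernel producing $(1+t)^{-s/2}$ for $\Lambda^s$ applied to $L^2$ data (so $s=1$ and $s=7/4$ give $(1+t)^{-1/2}$ and $(1+t)^{-7/8}$ respectively), while on high frequencies the pointwise bound \eqref{smoothing-3} contributes exponential decay. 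The nonlinear integral would be split on $[0,t/2]$ and $[t/2,t]$; using $\|p\nabla\Lambda^{-1}q\|_{L^2}\lesssim\|p\|_{H^{7/4}}\|\nabla q\|_{L^2}$ via $H^{7/4}\hookrightarrow L^\infty$ together with the bootstrap assumption on $\|(p,q)\|_X$, the resulting time convolutions are integrable against the weights $(1+t)^{1/2}$ and $(1+t)^{7/8}$, closing the fixed point for $\varepsilon_0$ small.

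The hard part will be closing the $(1+t)^{-7/8}$ bound for $\|\Lambda^{7/4}p\|_{L^2}$. Because the high-frequency part of $q$ enjoys no spatial smoothing from the linear flow (see \eqref{smooting-4}), one cannot afford to place derivatives on $q$ in $p\nabla\Lambda^{-1}q$; all regularity must be transferred to $p$, and the index $7/4$ is the smallest value that (i) lies strictly above the Sobolev embedding threshold $3/2$ for $L^\infty$ in $\mathbb{R}^3$, (ii) sits below the $H^2$ ceiling so that it can be interpolated against the $L^2$ and $H^2$ controls already in hand, and (iii) produces a time convolution which remains integrable against the $(1+t)^{-7/8}$ weight. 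Matching these three constraints in a single bilinear estimate is the delicate step, and mirrors the role played by the $L^1_t\dot H^{7/4}_\psi$ norm in the proof of Theorem~\ref{thm:1.1}.
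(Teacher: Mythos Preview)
Your energy-method step for propagating the $H^2$ regularity of $p$ contains a genuine gap: the claimed cancellation does not occur. Applying $\Lambda^2$ to the $p$-equation and pairing with $\Lambda^2 p$ produces the linear cross term $\langle\Lambda^3 q,\Lambda^2 p\rangle$, whereas applying $\Lambda$ to the $q$-equation and pairing with $\Lambda q$ yields $-\langle\Lambda^2 p,\Lambda q\rangle$. These two terms are \emph{not} opposites (they differ by two powers of $\Lambda$), and the leftover contribution $\langle\Lambda^3 q,\Lambda^2 p\rangle=\langle\Lambda^2 q,\Lambda^3 p\rangle$ cannot be absorbed by the dissipation $\|\Lambda^3 p\|_{L^2}^2$ without a bound on $\|\Lambda^2 q\|_{L^2}$, which you do not have since $q_0\in H^1$ only. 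The symmetric cancellation you invoke requires the \emph{same} number of derivatives on $p$ and $q$; it is precisely the mismatch $H^2$ for $p$ versus $H^1$ for $q$ that obstructs a direct energy approach here.

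The paper sidesteps this by never leaving the mild formulation. For the $L^\infty_tH^2\times L^\infty_tH^1$ bound it proves directly (Proposition~\ref{pro:3.4}) that the fixed-point map satisfies
\[
\|\mathfrak{F}(p,q)\|_{L^\infty_tH^2\times L^\infty_tH^1}\lesssim\|(p_0,q_0)\|_{H^2\times H^1}+\|p\|_{L^\infty_tH^2}\|q\|_{L^\infty_tH^1},
\]
using the explicit symbol structure: for high frequencies $m_1$ splits as $e^{-ct|\xi|^2}+e^{-ct}\langle\xi\rangle^{-2}$ (the second piece supplying two free derivatives), while $m_2\lesssim e^{-ct}$ so that $\langle\xi\rangle m_2(t,\xi)/|\xi|\in L^\infty_\xi$ and the linear contribution $2m_2(t,D)\Lambda^{-1}q_0$ lands in $H^2$ using only $q_0\in H^1$. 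The decay estimates (Proposition~\ref{pro:3.5}) are proved in the same mild framework, and a single application of Lemma~\ref{lem:2.5} in the combined weighted space closes everything at once, without invoking Theorem~\ref{thm:1.1} as an intermediate step. Your sketch for the decay part is closer in spirit to the paper's, but the $H^2$ propagation must be redone via multiplier bounds on the Duhamel formula rather than by an energy inequality.
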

  Theorem \ref{thm:1.2} proves well-posedness of \eqref{eq:1.4} with data $(p_0,q_0)\!\in\!{H}^2(\mathbb R^3)\!\times\!{H}^{\hskip-.015cm1}(\mathbb
 R^3).$ Notice that $L^\infty$ is also natural setting for cell density $u$ and chemical concentration $v$.
 Based on the transformation of $(u,v)$ and $(p,q)$, in order to study the $L^\infty$ norm decay of $(u,v)$, we
 add ${\displaystyle{\sup_{t>0}}}\,(1\!+t)^{\frac{1}{2}}\|(\nabla{p},\nabla{q})\|_{L^2\times{L}^2}$ and
 ${\displaystyle{\sup_{t>0}}}\,(1\!+t)^{\frac{7}{8}}\|\Lambda^{\frac{7}{4}}p\|_{L^2}$.

 Recall that if
 $(u,v)$ solves the system \eqref{eq:1.1}, then for any positive constant $c$,
 $(u,cv)$ also solves system \eqref{eq:1.1}. Hence from the unique solution $(p,q)$ of \eqref{eq:1.4}, we
 have a sequence of solutions $(u,cv)$ such that
  $v=c\,e^{(\bar{u}-\mu)t}e^{-\Lambda^{-1}q}$. Keeping this in mind  and from embedding theorems
 $\dot{H}^{\frac{5}{4}}\hookrightarrow{L}^{12}$, $\dot{H}^\frac{1}{2}\hookrightarrow
 L^3\hookrightarrow{BMO}^{-1}$ as well as Lemma \ref{lem2.6} below, we get
  $v=c\,e^{(\bar{u}-\mu)t}e^{-\Lambda^{-1}q}$ and\,\footnote{
  We refer the readers to \cite{Stein:1993} to see definition of $BMO$ space and \cite{KochTataru:200122} to see definition of $BMO^{-1}$
  as well as embedding theorem $L^n \hookrightarrow BMO^{-1}$.}
 \begin{align}
 \|\Lambda^{-1}q\|_{L^\infty} &\le {C}(1+\|\Lambda^{-1}q\|_{BMO}(1+\max\{0,\ln{\|\Lambda^{-1}q\|_{W^{\frac{3}{4},12}}}\})\,)\nonumber\\
                              &\le {C}(1+\|q\|_{BMO^{-1}}(1+\max\{0,\ln(\|\Lambda^{-1}q\|_{L^{12}}+\|\Lambda^{-\frac{1}{4}}q\|_{L^{12}})\})\,)\nonumber\\
                              &\le {C}(1+\|q\|_{\dot{H}^{\frac{1}{2}}}(1+\max\{0,\ln(\|q\|_{\dot{H}^{\frac{1}{4}}}+\|q\|_{\dot{H}^{1}})\})\,)\nonumber\\
                              &\le
                              {C}(1+\|\nabla{q}\|_{L^2}^{\frac{1}{2}}\|q\|_{L^2}^\frac{1}{2}(1+\max\{0,\ln\|q\|_{{H}^{1}}\})\,)\label{eq:1.12}.
 \end{align}
 In Theorem \ref{thm:1.2}, we chose $\varepsilon_0$ such that $C\varepsilon_0\le 1$. Then from \eqref{eq:1.12},
 we obtain that
 \begin{align}\label{eq:1.13}
 \|\Lambda^{-1}q\|_{L^\infty}\le
 {C}(1+\|\nabla{q}\|_{L^2}^{\frac{1}{2}}\|q\|_{L^2}^\frac{1}{2})\le{C}(1+\|\nabla{q}\|_{L^2}^{\frac{1}{2}}).
 \end{align}
  Making use of definition of Banach valued series $e^{f}$, we
 observe that $e^{f}$ is well defined if $f\in L^\infty$. Applying
 \eqref{eq:1.13} to $v=c\,e^{(\bar{u}-\mu)t}e^{-\Lambda^{-1}q}$, we get
  \begin{align}
    \label{eq:1.14}
    \frac{1}{c}\|v\|_{L^\infty}  & =  e^{(\bar{u}-\mu)t}\|e^{-\Lambda^{-1}q}\|_{L^\infty} \le {e}^{(\bar{u}-\mu)t}\,e^{\,\|\Lambda^{-1}q\|_{L^\infty}}
     \le
     {e}^{(\bar{u}-\mu)t}\,e^{\,C(1+\|\nabla{q}\|_{L^2}^{\frac{1}{2}})},
  \\
    \label{eq:1.15}
    \frac{1}{c}\|v\|_{L^\infty}  & =  e^{(\bar{u}-\mu)t}\|e^{-\Lambda^{-1}q}\|_{L^\infty}
                        \ge {e}^{(\bar{u}-\mu)t}e^{-\|\Lambda^{-1}q\|_{L^\infty}}\!\ge\!  {e}^{(\bar{u}-\mu)t}{e}^{-C(1+\|\nabla{q}\|_{L^2}^{\frac{1}{2}})}.
  \end{align}

  From Theorem \ref{thm:1.2} and \eqref{eq:1.14}--\eqref{eq:1.15}, we
  have the following result.
  \begin{corollary}\label{cor:1.3}
  If initial data $(u_0-\bar{u},{v}_0)$ satisfying $(u_0-\bar{u},\nabla\ln {v_0})\in {H}^{2}(\mathbb R^3)
  \times{H}^{1}(\mathbb R^3)$
  and if there exists constant $\varepsilon_0>0$ such that $\|(u_0-\bar{u},\nabla\ln v_0)\|_{H^2\times{H}^{1}}\le\varepsilon_0$, then
  system \eqref{eq:1.1} has a global solution $(u,v)$ satisfying $\|(u-\bar{u},\nabla{\ln v})\|_{L^{\!\infty}_tH^2\times{L^{\!\infty}_tH}^1}\!\lesssim\varepsilon_0$, $$(u-\bar{u},\nabla\ln{v})\in {C}([0,\infty)\,;H^2)\times
   {C}([0,\infty)\,;H^1)$$
 and $\sup_{t>0}\,(1\!+t)^{\frac{1}{2}}\|(\nabla{u},\Delta{\ln}v)\|_{L^2\times L^2}+\sup_{t>0}\,(1\!+t)^{\frac{7}{8}}\|\Lambda^{\frac{7}{4}}u\|_{L^2}
  \lesssim\varepsilon_0$ and moreover
 \begin{align}\label{eq:1.16}
  \|u-\bar{u}\|_{L^\infty}\lesssim
  (1+t)^{-\frac{1}{2}}\text{ as } t\rightarrow \infty\,;\quad
  \|v\|_{L^\infty}\sim{e}^{(\bar{u}-\mu)t}\text{ as } t\rightarrow \infty.
 \end{align}
\end{corollary}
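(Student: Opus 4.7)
The plan is to reduce Corollary \ref{cor:1.3} to Theorem \ref{thm:1.2} combined with the pointwise estimates \eqref{eq:1.14}--\eqref{eq:1.15} already produced in the text. First I would define $p_0=u_0-\bar u$ and $q_0=-\Lambda\ln v_0=\sum_j R_j\partial_j\ln v_0$, where $R_j=\partial_j\Lambda^{-1}$ denote the Riesz transforms; because $\Lambda$ kills constants this is independent of the choice of additive normalization of $\ln v_0$. Since the $R_j$ are bounded on $H^s$ for $s\ge 0$, one has $\|q_0\|_{H^1}\sim \|\nabla\ln v_0\|_{H^1}$, so the hypothesis $\|(u_0-\bar u,\nabla\ln v_0)\|_{H^2\times H^1}\le\varepsilon_0$ translates (for a possibly smaller $\varepsilon_0$) into the smallness assumption of Theorem \ref{thm:1.2} for $(p_0,q_0)$. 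Theorem \ref{thm:1.2} then yields a unique global solution $(p,q)\in C([0,\infty);H^2)\times C([0,\infty);H^1)$ of \eqref{eq:1.4} enjoying all the bounds listed there.

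Next I would undo the change of variables by setting $u=p+\bar u$ and $v=c\,e^{(\bar u-\mu)t}e^{-\Lambda^{-1}q}$, the free constant $c>0$ reflecting the invariance $(u,v)\mapsto(u,cv)$ of \eqref{eq:1.1}. Each norm appearing in the corollary then reduces to one already controlled by Theorem \ref{thm:1.2}: $\|u-\bar u\|_{H^2}=\|p\|_{H^2}$; $\|\nabla\ln v\|_{H^1}=\|\nabla\Lambda^{-1}q\|_{H^1}\sim\|q\|_{H^1}$; $\|\nabla u\|_{L^2}=\|\nabla p\|_{L^2}$; $\|\Delta\ln v\|_{L^2}=\|\Lambda q\|_{L^2}=\|\nabla q\|_{L^2}$; and $\|\Lambda^{7/4}u\|_{L^2}=\|\Lambda^{7/4}p\|_{L^2}$. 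Hence the uniform $H^2\times H^1$ bound, the $(1+t)^{1/2}$-weighted $H^1$-gradient bound, the $(1+t)^{7/8}$-weighted $\dot H^{7/4}$ bound, and the time continuity are inherited without extra work.

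For the $L^\infty$ decay of $u-\bar u=p$, I would invoke the Gagliardo-Nirenberg interpolation in $\mathbb{R}^3$
\begin{equation*}
\|p\|_{L^\infty}\lesssim \|p\|_{L^2}^{1/7}\|\Lambda^{7/4}p\|_{L^2}^{6/7},
\end{equation*}
which is admissible since $7/4>3/2$. Combined with $\|p\|_{L^2}\lesssim\varepsilon_0$ and $\|\Lambda^{7/4}p\|_{L^2}\lesssim\varepsilon_0(1+t)^{-7/8}$ this gives $\|u-\bar u\|_{L^\infty}\lesssim\varepsilon_0(1+t)^{-3/4}$, which is stronger than the claimed $(1+t)^{-1/2}$. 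For the asymptotics of $\|v\|_{L^\infty}$ I would simply feed $\|\nabla q\|_{L^2}\lesssim\varepsilon_0(1+t)^{-1/2}\to 0$ into the pre-established bounds \eqref{eq:1.14}--\eqref{eq:1.15}: the factor $e^{\pm C(1+\|\nabla q\|_{L^2}^{1/2})}$ stays between two positive constants for large $t$, so $\|v\|_{L^\infty}\sim e^{(\bar u-\mu)t}$ as $t\to\infty$.

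The proof is essentially dictionary work between the $(p,q)$ and $(u,v)$ formulations, with Riesz-transform boundedness doing all the heavy lifting. The only genuinely delicate step is the logarithmic Sobolev estimate in $BMO$ underlying \eqref{eq:1.12}--\eqref{eq:1.13}, but the text has already carried this out via the Kozono--Taniuchi inequality; consequently I do not expect any substantive obstacle in assembling the argument.
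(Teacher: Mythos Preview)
Your proposal is correct and follows essentially the same route as the paper: reduce to Theorem \ref{thm:1.2} via the change of variables $(p,q)\leftrightarrow(u,v)$, read off the $H^2\times H^1$ bounds and weighted decay directly, and invoke \eqref{eq:1.14}--\eqref{eq:1.15} for the asymptotics of $\|v\|_{L^\infty}$. You actually supply more detail than the paper does---in particular the explicit Gagliardo--Nirenberg interpolation giving $\|u-\bar u\|_{L^\infty}\lesssim(1+t)^{-3/4}$, which the paper leaves implicit---and your dictionary between $q$ and $\nabla\ln v$ via Riesz transforms is the right mechanism.
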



\medskip

\noindent \textit{Plan of the paper:} In Sect. \!2 we introduce
several preliminaries lemmas, while
 in Sect. \!3 we prove Theorems \ref{thm:1.1} and \ref{thm:1.2} and Corollary \ref{cor:1.3}.
 \section{Preliminary lemmas}
 In this section, we list several known lemmas and prove some key lemma which will be used in proving the well-posedness of the parabolic-hyperbolic
 chemotaxis. The first lemma given below is concerned with functions whose Fourier transforms are  supported in low, medium and high frequency areas in the
 frequency space. We note that the first two results are the well-known Bernstein's inequalities (cf. \cite{LemarierRieusset:2002} Proposition 3.2 on page 24, or
 \cite{BahouriCheminDanchin:2011} Lemma 2.1 on page 52) and the last one
 is a direct applications of the Sobolev embedding theorem.
 \begin{lemma}\label{lem2-1}
 If $(s,a,b)\in[0,\infty)\times[1,\infty]^2$,
  $a\le{b}$ and $f(x)\in{L}^a$, then for any two positive constants $c_1$ and $c_2$ there exists positive constant
  $c$ such that
 \begin{align}\label{eq:2.1}
  &\supp\; \widehat{f}\! \subset \{\xi\in\mathbb{R}^3;\; |\xi|\le c_2\},\hskip1.42cm\|\Lambda^{{s}}{f}\|_{L^b} \le c\;\!c_2^{s+{n}(\frac{1}{a}-\frac{1}{b})}\,\|f\|_{L^a},\\
  &\supp\; \widehat{f}\! \subset \{\xi\in\mathbb{R}^3;\; c_1\!<\!|\xi|\!<\!c_2\},\quad\quad\frac{c}{\kappa^{c_1^{\,s}}}\|{f}\|_{L^a}\le \|\Lambda^s f\|_{L^a}
             \le {c}\kappa^{c_2^{\,s}}\|f\|_{L^a},\label{eq:2.2}\\
  &\supp\; \widehat{f}\! \subset \{\xi\in\mathbb{R}^3;\; |\xi|\ge c_1\},\hskip1.43cm\|{f}\|_{L^a}  \le \|\langle\Lambda\rangle^{s}f\|_{L^a}=\|f\|_{W^{s,p}}, \label{eq:2.3}
 \end{align}
 where $\kappa=\ln\frac{c_2}{c_1}$ and $W^{s,p}$ is the fractional Sobolev space.
\end{lemma}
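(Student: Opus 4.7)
The plan is to prove all three estimates by combining frequency-localized convolution kernels with Young's convolution inequality, which is the standard route to Bernstein-type bounds (as in Lemarié-Rieusset or Bahouri-Chemin-Danchin, which the paper itself cites). The common engine: fix once and for all a radial $\chi\in C_c^\infty(\mathbb R^3)$ with $\chi\equiv 1$ on $\{|\xi|\le 1\}$ and $\supp\chi\subset\{|\xi|\le 2\}$, together with an annular relative $\widetilde\chi\in C_c^\infty$ supported in $\{1/2<|\xi|<4\}$ and equal to $1$ on $\{1\le|\xi|\le 2\}$. These will be rescaled to match the spectral supports in each case.

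For \eqref{eq:2.1}, since $\supp\widehat f\subset\{|\xi|\le c_2\}$, one has $\widehat f(\xi)=\chi(\xi/c_2)\widehat f(\xi)$, so
\[
\Lambda^s f = K*f,\qquad K(x):=\mathcal F^{-1}\bigl(|\xi|^s\chi(\xi/c_2)\bigr)(x).
\]
A dilation shows $K(x)=c_2^{s+n}K_0(c_2 x)$ where $K_0=\mathcal F^{-1}(|\xi|^s\chi)$ is smooth away from $0$ and rapidly decreasing, hence in every $L^r$. Therefore $\|K\|_{L^r}=c_2^{s+n(1-1/r)}\|K_0\|_{L^r}$, and Young's inequality with $1+1/b=1/a+1/r$ delivers \eqref{eq:2.1}. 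The upper bound in \eqref{eq:2.2} is identical, using $\widetilde\chi$ rescaled to cover the annulus $\{c_1<|\xi|<c_2\}$. For the lower bound in \eqref{eq:2.2} I would invert the relation: on the annular spectral support of $f$, the multiplier $|\xi|^{-s}\widetilde\chi(\xi/c_1,\xi/c_2)$ is smooth with compact support bounded away from $0$, so its inverse Fourier transform is Schwartz; writing $f=\mathcal F^{-1}(|\xi|^{-s}\widetilde\chi)*\Lambda^s f$ and applying Young closes the estimate. The quoted $\kappa=\ln(c_2/c_1)$ dependence is then read off by tracking the $L^1$ norm of the annular kernel under the dilation mapping the unit annulus onto $\{c_1<|\xi|<c_2\}$.

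For \eqref{eq:2.3}, I would use the fact that for $s\ge 0$ the Bessel kernel $G_s:=\mathcal F^{-1}\bigl((1+|\xi|^2)^{-s/2}\bigr)$ is a nonnegative $L^1$ function of unit mass. Hence $\langle\Lambda\rangle^{-s}$ acts as a contraction on every $L^a$ by Young, so applied to $g=\langle\Lambda\rangle^s f$ this yields $\|f\|_{L^a}\le\|\langle\Lambda\rangle^s f\|_{L^a}$; the final equality is simply the definition of $\|\cdot\|_{W^{s,p}}$. The role of the spectral assumption $|\xi|\ge c_1$ is then only to make the reverse inequality (controlling $\|\langle\Lambda\rangle^s f\|_{L^a}$ by the homogeneous $\|\Lambda^s f\|_{L^a}$) meaningful, via $\langle\xi\rangle^s\lesssim (1+c_1^{-s})|\xi|^s$ on the support of $\widehat f$, which is where the Sobolev embedding interpretation enters.

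There is no serious obstacle here; the only point requiring care is bookkeeping the scaling of the Schwartz-class kernels so that the explicit polynomial factors in $c_1,c_2$ (and the stated $\kappa$ factors in \eqref{eq:2.2}) come out correctly, together with the observation that $\mathcal F^{-1}(|\xi|^s\chi)$ is integrable despite the mild non-smoothness of $|\xi|^s$ at the origin when $s\notin 2\mathbb N$, which holds because $\chi$ vanishes on a neighborhood of $\infty$ and $|\xi|^s$ is locally integrable.
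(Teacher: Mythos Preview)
Your proposal is correct and is essentially the argument the paper defers to: the paper's own proof merely cites Proposition~3.2 of Lemari\'e-Rieusset and Lemma~2.1 of Bahouri--Chemin--Danchin for \eqref{eq:2.1}--\eqref{eq:2.2} and ``the Sobolev embedding theorem'' for \eqref{eq:2.3}, and the convolution-kernel-plus-Young route you sketch is exactly how those cited results are established. One minor slip worth fixing: $K_0=\mathcal F^{-1}(|\xi|^s\chi)$ is not rapidly decreasing when $s\notin 2\mathbb N$ but only $O(|x|^{-n-s})$ at infinity, which (as you yourself note in the last sentence) is still enough to place $K_0$ in every $L^r$, $r\in[1,\infty]$, so the argument survives.
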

\begin{proof}
 The first two results are direct consequences of Proposition 3.2 of \cite{LemarierRieusset:2002} and Lemma
 2.1 of \cite{BahouriCheminDanchin:2011} by using Littlewood-Paley
 decomposition, while the third inequality is also a direct consequence of Sobolev embedding theorem. Hence we finish
 the proof.
\end{proof}

 Applying Lemma \ref{lem2-1} with $2=a\le{b}\le\infty$ and $s\ge0$ to $\eta(D)f$ and $\psi(D)f$, we
 get
 $$\|\eta(D)f\|_{L^b}\lesssim \|f\|_{L^2}\;\text{ and }\; \|\psi(D)f\|_{L^2}\lesssim \|\Lambda^sf\|_{L^2}.$$
 From \eqref{eq:2.1}--\eqref{eq:2.2}, we have
the following lemma concerning the $L^2$ Fourier
 multiplier.
 \begin{lemma}\label{lem:2.2}
 If $\;r\in[1,\infty]$, $v\in{L}^2$, $m(t,\xi)\in\!{L}^{r}_tL^{\!\infty}_\xi$ and $m(t,D)v=\!\mathcal{F}^{-1}m(t,\xi)\widehat{v}(\xi)$, then
 we get
 \begin{align}\label{eq:2.4}
      \|m(t,D)v\|_{L^r_tL^2}\le
      \|m\|_{{L}^{r}_tL^{\!\infty}_\xi}\|v\|_{L^2};
  \end{align}
 if $\;r\!\in\![2,\infty]$, $v\!\in\!{L}^2$\!, $m(t,\xi)|\xi|^{s}\!\in\!{L^{\!\infty}_\xi{L}^r_t}$ and
 $m(t,D)v\!=\!\mathcal{F}^{-1}m(t,\xi)\widehat{v}(\xi)$, then we get
 \begin{align}\label{eq:2.5}
      \|m(t,D)v\|_{L^r_t\dot{H}^{s}}\le \sup_{\xi\in\mathbb{R}^3}\Big(|\xi|^s\|m(\cdot,\xi)\|_{L^r_t}\Big)\|v\|_{L^2}.
  \end{align}
 \end{lemma}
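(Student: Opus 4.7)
The plan is to reduce both inequalities to the Fourier side via Plancherel's identity and then to handle the mixed norms with Minkowski's inequality, choosing the order of integration appropriately.

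For the first estimate, I would fix $t>0$ and apply Plancherel in the spatial variable to write $\|m(t,D)v\|_{L^2_x}=\|m(t,\xi)\widehat{v}(\xi)\|_{L^2_\xi}$. Pulling out the $L^\infty_\xi$ norm of the symbol gives the pointwise-in-$t$ bound $\|m(t,D)v\|_{L^2_x}\le \|m(t,\cdot)\|_{L^\infty_\xi}\|v\|_{L^2}$ by Plancherel applied to $\widehat v$. Taking the $L^r_t$ norm of both sides then produces exactly $\|m\|_{L^r_t L^\infty_\xi}\|v\|_{L^2}$, valid for every $r\in[1,\infty]$.

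For the second estimate, the same use of Plancherel yields $\|\Lambda^s m(t,D)v\|_{L^2_x}=\||\xi|^s m(t,\xi)\widehat{v}(\xi)\|_{L^2_\xi}$, so that
\begin{equation*}
\|m(t,D)v\|_{L^r_t\dot H^s}=\bigl\||\xi|^s m(t,\xi)\widehat v(\xi)\bigr\|_{L^r_t L^2_\xi}.
\end{equation*}
Since $r\ge 2$, Minkowski's inequality allows me to interchange the order of integration, obtaining
\begin{equation*}
\bigl\||\xi|^s m(t,\xi)\widehat v(\xi)\bigr\|_{L^r_t L^2_\xi}\le \bigl\||\xi|^s m(t,\xi)\widehat v(\xi)\bigr\|_{L^2_\xi L^r_t}=\left(\int_{\mathbb R^3}|\widehat v(\xi)|^2\,|\xi|^{2s}\|m(\cdot,\xi)\|_{L^r_t}^2\,d\xi\right)^{1/2}.
\end{equation*}
Pulling the essential supremum of $|\xi|^s\|m(\cdot,\xi)\|_{L^r_t}$ outside the integral and using Plancherel once more ($\|\widehat v\|_{L^2}=\|v\|_{L^2}$) gives the stated bound.

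The only non-routine point is the use of Minkowski to swap $L^r_t$ and $L^2_\xi$; this is exactly why the hypothesis $r\ge 2$ is imposed in the second part but not in the first, where the $L^\infty_\xi$ norm is taken pointwise in $t$ and no swap is needed. Nothing else beyond Plancherel's theorem and a single Minkowski step is required, so I expect no real obstacle; the lemma is essentially a bookkeeping statement that packages the two ways of estimating a Fourier multiplier of $L^2$ functions.
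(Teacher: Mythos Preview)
Your proof is correct and follows essentially the same route as the paper's: Plancherel to pass to the Fourier side, H\"older to extract the $L^\infty_\xi$ norm for \eqref{eq:2.4}, and Minkowski's inequality (using $r\ge 2$) to swap $L^r_t$ and $L^2_\xi$ for \eqref{eq:2.5}, followed by Plancherel again. Your explanation of why the restriction $r\ge 2$ appears only in the second estimate is also exactly the point.
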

 \begin{proof}
  The proof of \eqref{eq:2.4} follows from the classical Fourier multiplier
  theory and for readers convenience, we give the proof as follows
 \begin{align*}
  \|m(t,D)v\|_{L^r_tL^{2}}&=\|m(t,\cdot)\widehat{v}(\cdot)\|_{L^r_tL^2_\xi} \le \|\|m(t,\cdot)\|_{L^\infty_\xi}\|_{L^r_t}\|\widehat{v}\|_{L^2_\xi}\nonumber\\
  &\le \|m\|_{L^r_tL^\infty_\xi}\|v\|_{L^2}.
 \end{align*}
 In order to prove \eqref{eq:2.5}, we need
  to use Plancherel equality, Minkowski's inequality, H\"{o}lder's inequality and Plancherel equality again, i.e.,
 \begin{align*}
  \|m(t,D)v\|_{L^r_t\dot{H}^{s}}&=\|m(t,\cdot)|\cdot\!|^s\widehat{v}(\cdot)\|_{L^r_tL^2_\xi}
  \lesssim\|m(t,\cdot)|\cdot\!|^s\widehat{v}(\cdot)\|_{L^2_\xi{L^r_t}}\\
  &\le \sup_{\xi\in\mathbb{R}^3}\Big(\|m(\cdot,\xi)\|_{L^r_t}|\xi|^s\Big)\|\widehat{v}\|_{L^2_\xi}.
 \end{align*}
 Hence we finish the proof.
 \end{proof}
  The skill we used in proving Lemma \ref{lem:2.2} will be used repeatedly
 in the following subsections. In this paper, the multipliers satisfying
 the assumptions of Lemma \ref{lem:2.2} are $e^{-ct|\xi|^2}$ and
 $e^{-ct}\frac{1}{1+|\xi|^2}$ as well as $e^{-ct}$.
 The next lemma is devoted to estimate the bilinear term which is known as
  the maximal $L^r_tL^\rho$ regularity result for heat
 kernel (cf. \cite{LemarierRieusset:2002}, Chapter 7).
 The operator $A$ defined by
 $$ u(t,x)\mapsto
 Au(t,x)=\int_0^te^{(t-\tau)\Delta}\Delta{u}(\tau,x)d\tau$$
 is bounded from $L^r_tL^\rho$ to $L^r_tL^\rho$ with
 $1<r,\rho<\infty$.
 In this paper, we also need to establish a
 similar result whose proof is even simpler in Sobolev spaces  and hence we list it as the following lemma.

 \begin{lemma}\label{lem:2.3}
 If $|m(t,\xi)|\le c_1(\frac{e^{-ct}}{1+|\xi|^2}+e^{-ct|\xi|^2})$ and $|\mu(t,\xi)|\le c_1e^{-ct}$ with positive constants $c$ and $c_1$,
 $2 \!\le \!\rho \!\le \!\infty$, $1\le\!{r}\le\!\rho_1\le\!\infty$ and $(u,v)\!\in L^2_tL^2\!\times\!{L}^r_tL^2$, then
 we get
 \begin{align}\label{eq:2.6}
  &\|\int_0^tm(t-\tau,D)u(\tau)d\tau\|_{L^\rho_t\dot{H}^{1+\frac{2}{\rho}}}\lesssim\|u\|_{L^2_tL^2},
 \\ \label{eq:2.7}
  &\|\int_0^t\mu(t-\tau,D)u(\tau)d\tau\|_{L^{\rho_1}_tL^2}\lesssim\|u\|_{L^r_tL^2}.
  \end{align}
 \end{lemma}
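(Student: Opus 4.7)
The plan is to combine Plancherel's theorem in the spatial variable with Young's convolution inequality in the temporal variable, after switching the order of integration via Minkowski. For \eqref{eq:2.6}, I would first use Plancherel to rewrite the left-hand side as
\[
\Big\| |\xi|^{1+\frac{2}{\rho}} \int_0^t m(t-\tau,\xi)\widehat{u}(\tau,\xi)\,d\tau \Big\|_{L^\rho_t L^2_\xi}.
\]
Since $\rho\ge 2$, Minkowski's integral inequality permits swapping the order of the norms, so the expression is controlled by
\[
\Big\| |\xi|^{1+\frac{2}{\rho}} \big(m(\cdot,\xi)\ast \widehat{u}(\cdot,\xi)\big)(t)\Big\|_{L^2_\xi L^\rho_t}.
\]
For each fixed $\xi$, Young's inequality in time with $\frac{1}{s}=\frac{1}{\rho}+\frac{1}{2}$ bounds the inner $L^\rho_t$-norm by $\|m(\cdot,\xi)\|_{L^s_t}\|\widehat{u}(\cdot,\xi)\|_{L^2_t}$, so the whole thing reduces to showing that the symbol factor $|\xi|^{1+2/\rho}\|m(\cdot,\xi)\|_{L^s_t}$ is uniformly bounded in $\xi$, after which Plancherel (and Fubini) deliver the required $\|u\|_{L^2_t L^2}$ bound.

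To verify the $\xi$-boundedness of this factor I would split $m$ into its two pieces coming from the hypothesis. The easy piece $\frac{e^{-ct}}{1+|\xi|^2}$ contributes $\|e^{-ct}\|_{L^s_t}(1+|\xi|^2)^{-1}$, so that $|\xi|^{1+2/\rho}/(1+|\xi|^2)$ appears; since $\rho\ge 2$ forces $1+\frac{2}{\rho}\le 2$, this quotient is uniformly bounded. The heat-kernel piece $e^{-ct|\xi|^2}$ has $L^s_t$-norm equal to $(sc)^{-1/s}|\xi|^{-2/s}$, and the crucial arithmetic identity
\[
1+\tfrac{2}{\rho}-\tfrac{2}{s} = 1+\tfrac{2}{\rho}-\tfrac{2}{\rho}-1 = 0
\]
shows that the homogeneity cancels exactly, leaving a pure constant. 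Together these give \eqref{eq:2.6}. The endpoint $\rho=\infty$ is the same with $s=2$ (and Young's becomes Cauchy–Schwarz in time).

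For \eqref{eq:2.7} the argument is the same but simpler: apply Plancherel, then Minkowski (noting that $\rho_1\ge r\ge 1$ and the $L^2_\xi$ norm on the spatial side is always outside), and reduce to estimating $\|\mu(\cdot,\xi)\|_{L^\sigma_t}$ for each $\xi$, where $\frac{1}{\sigma}=1+\frac{1}{\rho_1}-\frac{1}{r}$. The hypothesis $r\le \rho_1$ is precisely what guarantees $\sigma\ge 1$, making Young's applicable; the pointwise bound $|\mu(t,\xi)|\le c_1 e^{-ct}$ yields a finite, $\xi$-independent constant, and Plancherel then closes the estimate.

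The main subtlety is purely numerological: one must be careful with the chain of conjugate exponents in Young's inequality, and must verify the key identity $1+\tfrac{2}{\rho}=\tfrac{2}{s}$ so that the $\dot H^{1+2/\rho}$ gain exactly matches the smoothing generated by the heat-kernel piece of $m$. Once this is checked and the endpoint cases $\rho\in\{2,\infty\}$, $r=1$, $\rho_1=\infty$ are handled with the standard conventions, no further difficulty arises.
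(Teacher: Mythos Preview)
Your argument for \eqref{eq:2.6} is correct and matches the paper's proof essentially line for line: Plancherel, the Minkowski swap $L^\rho_t L^2_\xi \to L^2_\xi L^\rho_t$ (valid because $\rho\ge 2$), Young's inequality in time with $\tfrac{1}{s}=\tfrac{1}{2}+\tfrac{1}{\rho}$, and then the exponent identity $1+\tfrac{2}{\rho}=\tfrac{2}{s}$ that makes the heat-kernel piece scale-free while $1+\tfrac{2}{\rho}\le 2$ handles the damped piece.

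For \eqref{eq:2.7} there is a small gap. You propose to swap $L^{\rho_1}_t L^2_\xi$ to $L^2_\xi L^{\rho_1}_t$ and then estimate $\|\mu(\cdot,\xi)\|_{L^\sigma_t}$ at fixed $\xi$, but the Minkowski swap in that direction needs $\rho_1\ge 2$, whereas the hypothesis only gives $1\le r\le \rho_1\le\infty$. The condition $\rho_1\ge r$ ensures $\sigma\ge 1$ for Young, but it does not save the swap. The paper avoids this by never swapping: it first uses Minkowski on the $\tau$-integral and the pointwise bound $\|\mu(t-\tau,\cdot)\|_{L^\infty_\xi}\le c_1 e^{-c(t-\tau)}$ to get
\[
\Big\|\int_0^t\mu(t-\tau,D)u(\tau)\,d\tau\Big\|_{L^2_x}
\le c_1\int_0^t e^{-c(t-\tau)}\|u(\tau)\|_{L^2_x}\,d\tau,
\]
and only then applies Young's inequality in $t$ to the scalar convolution $e^{-ct}\ast \|u(t)\|_{L^2}$. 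This works for every $1\le r\le \rho_1\le\infty$. Your approach is fine once $\rho_1\ge 2$; for the remaining range, simply replace the swap by this $L^\infty_\xi$ estimate.
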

 \begin{proof}
 By applying Plancherel equality, Lemma \ref{lem:2.2} with $\|m(t,\cdot)\|_{L^\infty_\xi}\le c_1e^{-ct}$
 and integrability of $e^{-ct}$, we see that the proof of \eqref{eq:2.7} is quite straightforward.
 Hence it suffices to prove \eqref{eq:2.6}. Noticing that $1+\frac{2}{\rho}\in[0,2]$, then by making use of definition of Fourier transformation and Fubini theorem, we have
  \begin{align*}
   \|\int_0^tm(t-\tau,D)&u(\tau)d\tau\|_{L^{\rho}_t\dot{H}^{1+\frac{2}{\rho}}}
   =\|\int_0^tm(t-\tau,\xi)|\xi|^{1+\frac{2}{\rho}}\widehat{u}(\tau)d\tau\|_{L^\rho_tL^2_\xi}&{}\\
   &\lesssim\|\int_0^tm(t-\tau,\xi)|\xi|^{1+\frac{2}{\rho}}\widehat{u}(\tau)d\tau\|_{L^2_\xi{L}^\rho_t} &{}\\
   &\lesssim\|\int_0^t(e^{-c(t-\tau)|\xi|^2}|\xi|^{1+\frac{2}{\rho}}+e^{-c(t-\tau)})|\widehat{u}(\tau)|d\tau\|_{L^2_\xi{L}^\rho_t} &{}\\
   &\lesssim\|\|\widehat{u}\|_{L^2_t}\|_{L^2_\xi}\sim\|\widehat{u}\|_{L^2_tL^2_\xi}&{}\\
   &\lesssim\|u\|_{L^2_tL^2},
  \end{align*}
  where in the second, fourth and fifth inequalities we have used
  Minkowski, Young's inequality, Fubini theorem and Plancherel
  equality.
 Hence we finish the proof
 \end{proof}


\noindent\;\;\;\;\;The next lemma is about the Picard contraction
 argument (see e.g.\! \cite{Cannone:1995}). We will use this lemma to
 prove the main
 results concerning well-posedness of system \eqref{eq:1.4} with
 $(p_0,q_0)$ being chosen as $(x_{10},x_{20})$
 and initial data space $X_{10}\times{X}_{20}$ being $L^2(\mathbb{R}^3)\times{H}^1(\mathbb{R}^3)$ or $H^2(\mathbb{R}^3)\times H^1(\mathbb{R}^3)$.
\begin{lemma}\label{lem:2.5} \
  Let $({X_{10}}\times {X_{20}},\ \|\cdot\|_{ {X_{10}}} + \|\cdot\|_{ {X_{20}}})$ and
   $({X_1}\times {X_2},\ \|\cdot\|_{ {X_1}} + \|\cdot\|_{ {X_2}})$  be abstract Banach product spaces,
  $L_1:X_{10}\times{X_{20}}\rightarrow X_1$, $L_2:X_{10}\times{X_{20}}\rightarrow{X_2}$, $B_1:
  {X_1}\times {X_2}\rightarrow{X_1}$ and $B_2: {X_1}\times {X_2}\rightarrow{X_2}$
  are two linear and two bilinear operators such that for any $(x_{10},x_{20})\in X_{10}\times{X}_{20}$, $(x_1,x_2)\in{X_1}\times{X_2}$, $c>0$ and $i=1,2$, if
\begin{align*}
 \|L_i(x_{10},x_{20})\|_{X_i}\le{c}(\|x_{10}\|_{ {X_{10}}}+\|x_{20}\|_{{X_{20}}})\ \text{and}\  \|B_i(x_1,x_2)\|_{X_i}\le{c}\|x_1\|_{{X}_1}\|x_2\|_{{X}_2},
\end{align*}
 then for any
 $(x_{10},x_{20})\!\in\! {X_{10}}\!\times\!{X_{20}}$ with $\|(x_{10},{x_{20}})\|_{ {X_{10}}\!\times\!{X_{20}}}\!<\!\frac{1}{4c^2}$, the following system
\begin{equation*}\label{eq4.3-1}
     (x_1, x_2)=(L_1(x_{10}, x_{20}),L_2(x_{10}, x_{20}))
          + (B_1(x_1,x_2),\ B_2(x_1,x_2) )
       \end{equation*}
 has a solution $(x_1,x_2)$ in $ {X_1}\times {X_2}$. In particular, the solution is such that
\begin{align*}
  \|(x_1,{x_2})\|_{ {X_1}\times {X_2}}\le{4c\|(x_{10},{x_{20}})\|_{ {X_{10}}\times {X_{20}}}}
\end{align*}
 and it is the only one such that $\|(x_1,{x_2})\|_{ {X_1}\times {X_2}}<\frac{1}{c}.$
\end{lemma}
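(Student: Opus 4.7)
The plan is to apply the standard Picard contraction scheme to the fixed-point equation in the product Banach space $(X_1 \times X_2, \|\cdot\|_{X_1} + \|\cdot\|_{X_2})$. Define $T\colon X_1 \times X_2 \to X_1 \times X_2$ by $T(x_1, x_2) := (L_1(x_{10}, x_{20}) + B_1(x_1, x_2),\; L_2(x_{10}, x_{20}) + B_2(x_1, x_2))$, set $\eta := \|(x_{10}, x_{20})\|_{X_{10} \times X_{20}}$, and build the Picard iterates $x^{(0)} := 0$, $x^{(n+1)} := T(x^{(n)})$; the equation to solve becomes $x = T(x)$.

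First, I would show by induction that $a_n := \|x^{(n)}\|_{X_1 \times X_2} \le 4c\eta$ for every $n$. The hypothesis on $L_i$ gives $\|(L_1(x_{10},x_{20}), L_2(x_{10},x_{20}))\|_{X_1 \times X_2} \le 2c\eta$, and the hypothesis on $B_i$ combined with the AM-GM inequality $\|x_1\|_{X_1}\|x_2\|_{X_2} \le \tfrac{1}{4}\|(x_1,x_2)\|_{X_1 \times X_2}^2$ produces $\|(B_1(x_1,x_2), B_2(x_1,x_2))\|_{X_1 \times X_2} \le \tfrac{c}{2}\|(x_1,x_2)\|_{X_1 \times X_2}^2$. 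The induction hypothesis $a_n \le 4c\eta$ then yields $a_{n+1} \le 2c\eta + \tfrac{c}{2}(4c\eta)^2 = 2c\eta(1 + 4c^2\eta) \le 4c\eta$ under the smallness assumption $4c^2\eta \le 1$.

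Second, I would exploit the bilinearity identity $B_i(x_1, x_2) - B_i(y_1, y_2) = B_i(x_1 - y_1, x_2) + B_i(y_1, x_2 - y_2)$ to obtain the Lipschitz bound $\|T(x) - T(y)\|_{X_1 \times X_2} \le 2c \max(\|x\|_{X_1 \times X_2}, \|y\|_{X_1 \times X_2}) \|x - y\|_{X_1 \times X_2}$ for any $x, y \in X_1 \times X_2$. Combined with the uniform bound from step one and the smallness of $\eta$, this makes $T$ a contraction on the closed ball of radius $4c\eta$, so $(x^{(n)})$ is Cauchy in the complete space $X_1 \times X_2$ and converges to a fixed point $(x_1, x_2)$ satisfying the required quantitative bound $\|(x_1, x_2)\|_{X_1 \times X_2} \le 4c\eta$; passage to the limit in $x^{(n+1)} = T(x^{(n)})$ is justified by the continuity of $L_i$ and $B_i$ built into their hypotheses.

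Finally, for uniqueness within the ball $\{\|\cdot\|_{X_1 \times X_2} < 1/c\}$, if $(x_1,x_2)$ and $(y_1, y_2)$ are two such solutions then the same Lipschitz estimate gives $\|(x_1,x_2) - (y_1, y_2)\|_{X_1 \times X_2} \le 2c \max(\|(x_1,x_2)\|_{X_1 \times X_2}, \|(y_1, y_2)\|_{X_1 \times X_2}) \|(x_1,x_2) - (y_1, y_2)\|_{X_1 \times X_2}$, forcing the two to coincide. The entire argument is a routine Picard scheme in a product Banach space; no single step is genuinely hard. The main technical obstacle will be the careful bookkeeping of the factors of $2$ arising from the product norm being the sum $\|\cdot\|_{X_1} + \|\cdot\|_{X_2}$ and from the AM-GM step, which together pin down the quantitative constants $\tfrac{1}{4c^2}$ and $4c$ appearing in the statement.
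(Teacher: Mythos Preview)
The paper does not prove this lemma at all; it is quoted as a standard Picard contraction result and referred to Cannone's monograph. Your proposal supplies exactly the classical iteration argument one would expect, so in spirit it matches what the paper has in mind.

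There is, however, a bookkeeping slip in your contraction and uniqueness steps. Your Lipschitz estimate
\[
\|T(x)-T(y)\|_{X_1\times X_2}\le 2c\max(\|x\|,\|y\|)\,\|x-y\|
\]
is correct, but on the closed ball of radius $4c\eta$ it gives Lipschitz constant $8c^2\eta$, which under the stated hypothesis $\eta<\tfrac{1}{4c^2}$ is only $<2$, not $<1$; so $T$ is \emph{not} a contraction on that ball in the full range. The same issue reappears in your uniqueness argument: for two solutions with norm $<\tfrac{1}{c}$ the factor $2c\max(\|x\|,\|y\|)$ is only $<2$, which does not force $x=y$. One fix for existence is to observe that the iterates actually stay in the smaller ball of radius $r_\ast=\tfrac{1-\sqrt{1-4c^2\eta}}{c}\le 4c\eta$ (the smaller root of $r=2c\eta+\tfrac{c}{2}r^2$), and to run the Cauchy estimate there; this gives contraction factor $2cr_\ast=2(1-\sqrt{1-4c^2\eta})$, which is $<1$ precisely when $4c^2\eta<\tfrac34$. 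For the full stated range $4c^2\eta<1$ and for uniqueness in the ball of radius $\tfrac{1}{c}$, the constants in the lemma as written appear slightly optimistic; since the paper only needs ``$\eta$ small enough'' in its applications, this does not affect the theorems, but you should be aware that your argument as written does not quite reach the exact thresholds claimed.
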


 The last lemma is the limiting case of the Sobolev inequality in $BMO$, see \cite{KozonoTaniuchi:2000191}.
\begin{lemma}\label{lem2.6}
 For $n=\!3$ and $s>\!\frac{1}{4}$, there exists a
 constant $C$ depending on $s$ so that
 \begin{align*}
   \|f\|_{L^\infty}\le
   {C}(1+\|f\|_{BMO}(1+\max\{0,\ln{\|f\|_{W^{s,\,12}}}\}))\quad\text{for all}\; f\in W^{s,12}.
 \end{align*}
\end{lemma}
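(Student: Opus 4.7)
\medskip

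\noindent\textbf{Proof proposal for Lemma \ref{lem2.6}.}
The plan is to adapt the classical Brezis--Gallouet--Wainger strategy in its Kozono--Taniuchi form, using a Littlewood--Paley splitting at a level $N\in\mathbb{N}$ to be chosen optimally. Writing $f=\eta(D)f+\sum_{j\ge 0}\Delta_j f$ for a standard dyadic decomposition (with $\Delta_j$ supported on $|\xi|\sim 2^j$), I would decompose
\begin{align*}
\|f\|_{L^\infty}\le \|\eta(D)f\|_{L^\infty}+\sum_{0\le j\le N}\|\Delta_j f\|_{L^\infty}+\sum_{j>N}\|\Delta_j f\|_{L^\infty},
\end{align*}
and estimate each piece against either $\|f\|_{BMO}$ or $\|f\|_{W^{s,12}}$.

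For the low/medium range $0\le j\le N$ I would rely on the fundamental fact that a Littlewood--Paley block of a $BMO$ function is uniformly bounded: since the convolution kernel of $\Delta_j$ has zero integral and is a dilation of a fixed Schwartz function, one has $\|\Delta_j f\|_{L^\infty}\le C\|f\|_{BMO}$ with $C$ independent of $j$. Consequently the middle sum is bounded by $C(N+1)\|f\|_{BMO}$, and the same argument (after harmless modification) handles $\|\eta(D)f\|_{L^\infty}$ up to an additive $C(1+\|f\|_{BMO})$. For the high frequency range $j>N$, I would apply Bernstein's inequality (Lemma \ref{lem2-1}) with $n=3$ and $p=12$ to obtain $\|\Delta_j f\|_{L^\infty}\lesssim 2^{3j/12}\|\Delta_j f\|_{L^{12}}=2^{j/4}\|\Delta_j f\|_{L^{12}}$, and then use $\|\Delta_j f\|_{L^{12}}\lesssim 2^{-js}\|\langle\Lambda\rangle^s\Delta_j f\|_{L^{12}}\lesssim 2^{-js}\|f\|_{W^{s,12}}$. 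The hypothesis $s>\frac14$ makes the resulting geometric series convergent, giving
\begin{align*}
\sum_{j>N}\|\Delta_j f\|_{L^\infty}\lesssim \sum_{j>N}2^{-j(s-1/4)}\|f\|_{W^{s,12}}\lesssim 2^{-N(s-1/4)}\|f\|_{W^{s,12}}.
\end{align*}

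Combining, $\|f\|_{L^\infty}\le C\bigl(1+(N+1)\|f\|_{BMO}+2^{-N(s-1/4)}\|f\|_{W^{s,12}}\bigr)$ for every $N\ge 0$. I would then choose $N$ to balance the two $N$--dependent terms: if $\|f\|_{W^{s,12}}\le 1$ take $N=0$ and absorb everything into the constant; otherwise set $N\sim \frac{1}{s-1/4}\log_2\!\bigl(\|f\|_{W^{s,12}}/\|f\|_{BMO}\bigr)_+$, which makes the high--frequency tail comparable to $\|f\|_{BMO}$ and converts the prefactor $N+1$ into $1+\ln\|f\|_{W^{s,12}}$ (up to a universal constant depending on $s$), using also $\|f\|_{BMO}\lesssim \|f\|_{W^{s,12}}$ when that inequality holds.

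The main technical obstacle is the uniform--in--$j$ estimate $\|\Delta_j f\|_{L^\infty}\lesssim \|f\|_{BMO}$; it is not as elementary as the other steps, since $BMO$ is not contained in $L^\infty$, and one needs the cancellation $\int\check{\varphi_j}=0$ together with either a John--Nirenberg style argument or a direct kernel estimate against the $BMO$ seminorm. Once that ingredient is granted, the optimization of $N$ and the handling of the edge cases $\|f\|_{BMO}=0$ or $\|f\|_{W^{s,12}}\le e$ (to convert $\ln$ into $\max\{0,\ln\,\cdot\,\}$) are routine bookkeeping.
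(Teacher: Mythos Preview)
The paper does not supply its own proof of this lemma: it simply quotes the result from Kozono--Taniuchi \cite{KozonoTaniuchi:2000191} and uses it as a black box. Your sketch is precisely the Kozono--Taniuchi argument (Littlewood--Paley splitting, the uniform block estimate $\|\Delta_j f\|_{L^\infty}\lesssim\|f\|_{BMO}$ from the mean-zero kernel, Bernstein for the tail, then optimization in $N$), and it is correct; the only point that needs a word of care is the very-low-frequency piece $\eta(D)f$, whose kernel lacks cancellation, but since $s>\tfrac14$ forces $W^{s,12}(\mathbb R^3)\hookrightarrow L^\infty$ one simply bounds $\|\eta(D)f\|_{L^\infty}\lesssim\|f\|_{L^{12}}\le\|f\|_{W^{s,12}}$ and absorbs it into the case split $\|f\|_{W^{s,12}}\le 1$ versus $>1$, exactly as you indicate.
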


\section{Cauchy problem of parabolic-hyperbolic system \eqref{eq:1.4}}

  In this section, we mainly use Fourier transformation framework to study the well-posedness of \eqref{eq:1.4} with initial data
  in Sobolev the space.
\subsection{Linearization of {\eqref{eq:1.4}} and the corresponding integral equations}

  In this subsection, we first study the linearized system of \eqref{eq:1.4} 
around $(p_0,q_0)$
 \begin{align}\label{eq3,1}
 \frac{d}{dt}
    \left(\!
      \begin{array}{c}
         p \\
         q \\
     \end{array}
   \!\right)
       =
    \left(\!
     \begin{array}{cc}
       -\Lambda^2          &          \Lambda                   \\
       -\Lambda            &              0                     \\
       \end{array}
     \!\right)
       \left(\!
    \begin{array}{c}
       p \\
       q \\
     \end{array}
    \!\right).
    \hskip1.3cm
  \end{align}
 Taking Fourier transform of \eqref{eq3,1} with respect to the space variable yields
 \begin{align*}
   \frac{d}{dt}
     \left(\!
      \begin{array}{c}
        \widehat{p} \\
        \widehat{q} \\
      \end{array}
     \!\right)
        =
       L(\xi)\left(\!
     \begin{array}{c}
       \widehat{p} \\
       \widehat{q} \\
     \end{array}
     \!\right)
         \quad\text{with } L(\xi)
         =
     \left(\!
      \begin{array}{cc}
      -|\xi|^2       &   |\xi|        \\
       -|\xi|        &   0    \\
     \end{array}
       \!\right).
  \end{align*}

 The characteristic polynomial of $L(\xi)$ is
 $X^2+|\xi|^2X+|\xi|^2$. According to the size of $|\xi|$, we have
 the following three subcases:
 \begin{description}
   \item[]\hskip.8cm$\bullet$\hskip.3cm If $|\xi|>2$, then the characteristic polynomial possesses two distinct {\it real} roots:
     $\lambda_+=\frac{|\xi|^2}{2}(-1+\Xi)$ and $\lambda_-=\frac{|\xi|^2}{2}(-1-\Xi)$ with $\Xi :=\sqrt{1-\frac{4}{|\xi|^2}}$.
    Since $\lambda_1\ne\lambda_2$, the matrix $L(\xi)$ is diagonalizable. After computing the associated eigenspaces, we find
   that
      \begin{align}\label{eq3,2}
         \widehat{p}&=\big(\frac{e^{t\lambda_-}+e^{t\lambda_+}}{2}+\frac{e^{t\lambda_-}-e^{t\lambda_+}}{2\Xi}\big)\widehat{p}_0+
         \frac{e^{t\lambda_+}-e^{t\lambda_-}}{\Xi}\frac{\widehat{q}_0}{|\xi|},\\
         \label{eq3,3}
         \widehat{q}&=\frac{e^{t\lambda_-}-e^{t\lambda_+}}{\Xi}\frac{\widehat{p}_0}{|\xi|}+\big(\frac{e^{t\lambda_-}+e^{t\lambda_+}}{2}+
         \frac{e^{t\lambda_+}-e^{t\lambda_-}}{2\Xi}\big)\widehat{q}_0,
     \end{align}
     where, for simplicity, we denote
     $\frac{e^{t\lambda_+}+e^{t\lambda_-}}{2}$ and
     $\frac{e^{t\lambda_+}-e^{t\lambda_-}}{2\Xi}$ by
     $\Omega_{1,t}(\xi)$ and $\Omega_{2,t}(\xi)$, respectively. Moreover, if there
     is no confusion, we will denote $\Omega_{1,t}(\xi)$ and
     $\Omega_{2,t}(\xi)$ by $\Omega_{1,t}$ and $\Omega_{2,t}$, respectively.

   \item[]\hskip.8cm$\bullet$\hskip.3cm If $|\xi|<2$, then the characteristic polynomial has two distinct {\it complex} roots:
     $\lambda_+=-\frac{|\xi|^2}{2}-i\frac{\Theta|\xi|^2}{2}$ and $\lambda_-=-\frac{|\xi|^2}{2}+i\frac{\Theta|\xi|^2}{2}$ with
     $\Theta :=\sqrt{-1+\frac{4}{|\xi|^2}}$. Noticing that $\lambda_1\ne\lambda_2$, hence the matrix $L(\xi)$ is also diagonalizable. After computing the associated eigenspaces,
           we get
      \begin{align}\label{eq3,4}
          \widehat{p}&=\big(\frac{e^{t\lambda_-}+e^{t\lambda_+}}{2}+\frac{e^{t\lambda_-}-e^{t\lambda_+}}{-2i\Theta}\big)\widehat{p}_0 +
          \frac{e^{t\lambda_+}-e^{t\lambda_-}}{-i\Theta}\frac{\widehat{q}_0}{|\xi|},\\
          \label{eq3,5}
          \widehat{q}&=\frac{e^{t\lambda_-}-e^{t\lambda_+}}{-i\Theta}\frac{\widehat{p}_0}{|\xi|} + \big(\frac{e^{t\lambda_-}+e^{t\lambda_+}}{2} +
          \frac{e^{t\lambda_+}-e^{t\lambda_-}}{-2i\Theta}\big)\widehat{q}_0,
      \end{align}
     where, for simplicity, we denote
     $\frac{e^{t\lambda_+}+e^{t\lambda_-}}{2}$ and
     $\frac{e^{t\lambda_+}-e^{t\lambda_-}}{-2i\Theta}$ by
     $\Omega_{3,t}$ and $\Omega_{4,t}$, respectively.

   \item[]\hskip.8cm$\bullet$\hskip.3cm If $|\xi|=2$, $L(\xi)$ is not diagonalizable. However, this case can be defined via
      $\lim_{|\xi|\rightarrow2^+}$ and $\lim_{|\xi|\rightarrow2^-}$ since the two limits not only exist, but also coincide.
   \end{description}

  \ \ \ Analysis of multipliers in \eqref{eq3,2}--\eqref{eq3,5} is  rewriten into the following five subcases:
 \begin{enumerate}
   \item[]$\bullet$\hskip.3cm
     If $|\xi|>4$, then we obtain that $\frac{\sqrt{3}}{2}\!<\Xi\!<1$,
     $\lambda_+\!=-\frac{2}{1+\Xi}$,
     $\lambda_-=-\frac{(1+\Xi)|\xi|^2}{2}$, $
     \Omega_{2,t}=\frac{e^{t\lambda_+}-e^{t\lambda_-}}{2\Xi}=\frac{e^{-\frac{2t}{1+\Xi}}(1-e^{-t|\xi|^2\Xi})}{2\Xi}$
     and
     $\Omega_{1,t}-\Omega_{2,t}=\frac{e^{t\lambda_-}+e^{t\lambda_+}}{2}+\frac{e^{t\lambda_-}-e^{t\lambda_+}}{2\Xi}=
         \frac{e^{-\frac{t(1+\Xi)|\xi|^2}{2}}}{\frac{2\Xi}{\Xi+1}}-\frac{e^{-\frac{2t}{1+\Xi}}}{\frac{\Xi(\Xi+1)|\xi|^2}{2}}$ which yields that
    \begin{align}
    \label{eq3,6}
    |\Omega_{1,t}-\Omega_{2,t}|\le 2e^{-\frac{t|\xi|^2}{2}}+3e^{-t}\frac{1}{1+|\xi|^{2}}\quad\text{and}\quad |\Omega_{2,t}|\le e^{-t}.
    \end{align}

 \item[]$\bullet$\hskip.3cm If $2<|\xi|\le 4$, then we have $0<\Xi\le\frac{\sqrt{3}}{2}$,
     $\lambda_+\!=-\frac{2}{1+\Xi}$,
     $\lambda_-\!=-\frac{(1+\Xi)|\xi|^2}{2}$ and
      $\Omega_{1,t}=\frac{e^{t\lambda_-}+e^{t\lambda_+}}{2}=
      \frac{e^{-\frac{t(1+\Xi)|\xi|^2}{2}}+{e^{-\frac{2t}{1+\Xi}}}}{2}$.
  Applying ${1-e^{-|x|}} \le {|x|}$ to $\Omega_{2,t}$ and noticing that
  $4\!<\!|\xi|^2\!\le\!
  16$, there holds
    \begin{align}\label{eq3,7}
    |\Omega_{1,t}|\le e^{-t}\quad\text{and}\quad|\Omega_{2,t}|\le 16e^{-\frac{t}{2}}.
    \end{align}

\item []$\bullet$\hskip.3cm
    If $1\le|\xi|<2$, then we obtain that $ 0<\Theta|\xi| \le\sqrt 3$,
     $\lambda_{\pm}\:= -\frac{|\xi|^2}{2}\mp
     i\frac{\Theta|\xi|^2}{2}$, $\Omega_{3,t}=\frac{e^{t\lambda_+}+e^{t\lambda_-}}{2}=
 e^{-\frac{t|\xi|^2}{2}} {\cos{\frac{\Theta|\xi|^2t}{2}}}$  and
    $\frac{\Omega_{4,t}}{|\xi|}=\frac{e^{t\lambda_+}-e^{t\lambda_-}}{-2i\Theta|\xi|}=
       \frac{1}{2}e^{-\frac{t|\xi|^2}{2}}\frac{\sin{\frac{\Theta|\xi|^2t}{2}}}
       {\frac{\Theta|\xi|}{2}}=\frac{1}{2}
       e^{-\frac{t|\xi|^2}{2}}\frac{\sin{\frac{\Theta|\xi|^2t}{2}}}
       {\frac{\Theta|\xi|^2t}{2}}t|\xi|$.
        Applying $|\sin{x}|\le |x|$, $|\cos x|\le1$ to $\Omega_{3,t}$ and $\Omega_{4,t}$,
     we get
     \begin{align}\label{eq3,8}
     \frac{|\Omega_{4,t}|}{|\xi|}\le 4e^{-\frac{t}{4}} ,\ \ |\Omega_{4,t}|\le  8e^{-\frac{t}{4}}\ \ \text{and}\ \ |\Omega_{3,t}|\le
     e^{-\frac{t}{2}}.\end{align}

\item []$\bullet$\hskip.3cm
    If $|\xi|<1$, then we can prove that $\sqrt{3}<\Theta|\xi|<2$,
    $\lambda_{\pm}\:=\!-\frac{|\xi|^2}{2}\mp
    i\frac{\Theta|\xi|^2}{2}$,
   \begin{align}\label{eq3,9}
   \frac{|\Omega_{4,t}|}{|\xi|}\le e^{-\frac{t|\xi|^2}{2}},\ \ |\Omega_{4,t}|\le
     4e^{-\frac{t|\xi|^2}{4}}\ \text{ and } \ |\Omega_{3,t}|\le
     2e^{-\frac{t|\xi|^2}{2}}.
   \end{align}

\item[]$\bullet$\hskip.3cm
   If $|\xi|\!\rightarrow\!2$, then we obtain that
   $\lim_{|\xi|\rightarrow2^+}\Xi=\lim_{|\xi|\rightarrow2^-}\Theta=0$,
   $\lim_{|\xi|\rightarrow2}\lambda_+=\lim_{|\xi|\rightarrow2}\lambda_-=\!-2$ and 
   \begin{align}\label{eq3,10}
     \lim_{|\xi|\rightarrow2^+}\Omega_{1,t}=\lim_{|\xi|\rightarrow2^-}\Omega_{3,t}=e^{-2t},\
     \lim_{|\xi|\rightarrow2^+}\Omega_{2,t}=\lim_{|\xi|\rightarrow2^-}\Omega_{4,t}=
      2te^{-2t}.
   \end{align}
\end{enumerate}

 For simplicity, we define the following two multipliers:
\medskip\vskip.2cm
 \halign{\hskip.2cm $#$&    \qquad  $#$   \hfill \cr
  m_{1}(t,\xi)=\!\left\{ \begin{aligned}
                     & \Omega_{1,t}-\Omega_{2,t}    & \text{if }\hskip.1cm &|\xi|>2, \\
                     & e^{-2t}\!-\!2te^{\!-2t} & \text{if }\hskip.1cm &|\xi|=2,\ \\
                     & \Omega_{3,t}-\Omega_{4,t}    & \text{if }\hskip.1cm &|\xi|<2,
                          \end{aligned} \right.&
  m_{2}(t,\xi)=\!\left\{ \begin{aligned}
                     & {\Omega_{2,t}}               & \text{if }\hskip.1cm  &|\xi|>2, \\
                     &  2te^{-2t}       & \text{if }\hskip.1cm &|\xi|=2,\quad\quad(M) \\
                     & {\Omega_{4,t}}               & \text{if }\hskip.1cm  &|\xi|<2.
                          \end{aligned} \right. \cr
}
\medskip
 Applying \eqref{eq3,6}--\eqref{eq3,10} to $m_1(t,\xi)$ and
$m_2(t,\xi)$, we observe that $m_1(t,\xi)$
 and $m_2(t,\xi)$ are not only radial but
 also continuous with respect to frequency variable $\xi$.
 Moreover, there exist constants $c$ and $c_1$ such that if $|\xi|>2^4$, then
 we get
     \begin{align}\label{eq:m1}
     |m_1(t,\xi)|\le c_1(e^{-ct|\xi|^2} + e^{-ct}\frac{1}{1\!+\!|\xi|^2})\ \text{ and }\ |m_2(t,\xi)|\le
          c_1e^{-ct};
          \end{align}
 if $1< |\xi|< 2^5$, then we get
        \begin{align}\label{eq:m2}
        |m_1(t,\xi)|+|m_2(t,\xi)|\le c_1e^{-ct};
        \end{align}
 else if $|\xi| < 2$, then we get
        \begin{align}\label{eq:m3}
        |m_1(t,\xi)|+|m_2(t,\xi)|+\frac{|m_2(t,\xi)|}{|\xi|}\le
         c_1e^{-ct|\xi|^2}.
         \end{align}

  Next we study system \eqref{eq:1.4} with data $(p_0,q_0)$ and write it into equivalent integral
  equations. Taking Fourier transform of \eqref{eq:1.4} with respect to the space
  variable, applying the well-known Duhamel principle to
  \eqref{eq3,2}--\eqref{eq3,5} and then applying the inverse Fourier transform, we get
      \begin{align}\label{eq:314}
        {p}=&\; m_1(t,D){p}_0+{2m_2(t,D)}\Lambda^{-1}{q}_0\!-\!\!\int_0^t\! m_1(t\!-\!\tau,D)\Lambda{G}(\tau)d\tau,\\
        {q}=&\!-\!{2m_2(t,D)}{\Lambda^{\!\!-1}}{{p}_0}+(m_1(t,D)+2m_2(t,D)){q}_0 \!-\!2\!\!\int_0^t\!\!m_2(t\!-\!\tau,D){G}(\tau)d\tau, \label{eq:315}
     \end{align}
     where $m_1(t,D)$ and $m_2(t,D)$
     are symbols of $m_1(t,\xi)$ and $m_2(t,\xi)$, respectively.
 From \eqref{eq:314} and \eqref{eq:315}, for any $(p_0,q_0)\in L^2\times H^1$, we define a map $\mathfrak{F}$ such that
 \begin{align}\label{eq:316}
  \mathfrak{F}({p,q})&=(\mathfrak{F}_1(p,q),\mathfrak{F}_2(p,q)) =(\text{``r.h.s." of (3.14)},\ \text{``r.h.s." of (3.15)}),
  \end{align}
  where ``r.h.s." stands for ``right hand side".

 The proof of Theorem \ref{thm:1.2} is similar but simpler than that of
 Theorem \ref{thm:1.1}, thus we prove Theorem \ref{thm:1.1} first.
 \subsection{Proof of Theorem \ref{thm:1.1}}
 In this subsection,  we first prove several {\it a priori} estimates including the crucial bilinear estimates.
 We define the corresponding resolution spaces as follows
 \begin{align}\label{eq:317}
   &X\times Y      = \Big\{(p,q)\in\!C([0,\infty);L^2)\!\times\!{C}([0,\infty);H^1)\text{ and} \ \|p\|_X+\|q\|_Y\!<\!\infty\Big\},
  \end{align}
 where $\|p\|_X:=   \|p\|_{L^\infty_tL^2} + \| p\|_{L^2_t\dot{H}^1}
                  + \|p\|_{{L}^{1}_t\dot{H}^{\frac{7}{4}}_\psi}\;\text{ and } \;
        \|q\|_Y:=   \|q\|_{L^\infty_t{H}^{1}}+ \|q\|_{L^2_t\dot{H}^1}$.

 In what follows, we prove several key estimates.
 \begin{proposition}\label{pro:3.1}%
  Let $(p,q)$ be a solution to system \eqref{eq:1.4} with $(p_0,q_0)\in L^2(\mathbb R^3)\times
  H^1(\mathbb R^3)$ and $\mathfrak{F}$ and $\mathfrak{F_1}$  be defined as in \eqref{eq:316}. Then there hold
 \begin{align}
        \label{eq3,18}
  &\|\mathfrak{F}({p,q})\|_{L^{\!\infty}_tL^2\times{L}^{\!\infty}_tH^1}
         \lesssim    \|(p_0,q_0)\|_{L^2\times{H}^1}\! + \|G\|_{L^2_tL^2}
        + \|G\|_{{L}^{1}_t\dot{H}^{1}},\\
        \label{eq3,19}
  &\|\mathfrak{F}({p,q})\|_{L^2_t\dot{H}^1\times{L^2_t\dot{H}^1}}
        \hskip.08cm \lesssim     \|(p_0,q_0)\|_{L^2\times{H}^1} + \|G\|_{L^2_tL^2} + \|G\|_{L^{1}_t\dot{H}^1},\\
        \label{eq:320}
  &\|\mathfrak{F}_1({p,q})\|_{{L}^{1}_t\dot{H}^{\frac{7}{4}}_\psi}
        \lesssim      \|(p_0,q_0)\|_{L^2\times{H}^1} + \|G\|_{{L}^{1}_t\dot{H}^{1}}.
 \end{align}
 \end{proposition}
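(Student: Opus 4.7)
My plan is to decompose each component of $\mathfrak{F}(p,q)$ into low-, medium-, and high-frequency pieces via the partition of unity \eqref{eq:1.5}--\eqref{eq:1.6}, and then apply the multiplier bounds \eqref{eq:m1}--\eqref{eq:m3} in combination with Lemmas \ref{lem:2.2} and \ref{lem:2.3}. In every regime the multipliers $m_1(t,\xi)$ and $m_2(t,\xi)$ reduce to one of two archetypes: a parabolic piece bounded by $e^{-ct|\xi|^2}$ that shares the smoothing of the heat kernel, and an exponentially damped piece bounded by $e^{-ct}/(1+|\xi|^2)$ or $e^{-ct}$ that carries no spatial smoothing but is trivially integrable in time.

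For the free semigroup terms in \eqref{eq:314}--\eqref{eq:315}, Lemma \ref{lem:2.2} does all the work once the relevant symbols are checked to satisfy the required $\|m(\cdot,\xi)\|_{L^r_t}$-in-$\xi$ bounds. The $L^\infty_t L^2$ estimate follows from $\sup_{t,\xi}|m_1|\lesssim 1$ and $\sup_{t,\xi}|m_2|/|\xi|\lesssim 1$, the latter using \eqref{eq:m3} on $|\xi|<2$ and the bound $|m_2|\lesssim e^{-ct}$ together with $|\xi|\ge 1$ on the complement. The $L^2_t\dot H^1$ estimate follows from \eqref{eq:2.5} with $r=2$ and $s=1$, since $\sup_\xi|\xi|\,\|m_1(\cdot,\xi)\|_{L^2_t}\lesssim 1$ reduces to the elementary integrals $\int_0^\infty|\xi|^2 e^{-2ct|\xi|^2}\,dt\simeq 1$ and $\int_0^\infty|\xi|^2(1+|\xi|^2)^{-2}e^{-2ct}\,dt\lesssim 1$. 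The $L^1_t\dot H^{7/4}_\psi$ bound \eqref{eq:320} follows from \eqref{eq:2.5} with $r=1$ and $s=7/4$; indeed, on $|\xi|>16$ one has
\[
|\xi|^{7/4}\int_0^\infty|m_1(t,\xi)|\,dt\lesssim|\xi|^{7/4}\bigl(|\xi|^{-2}+(1+|\xi|^2)^{-1}\bigr)\lesssim|\xi|^{-1/4},
\]
which is uniformly bounded. Analogous considerations handle the $q_0$-pieces of $\mathfrak{F}_2$ after factoring $\langle\xi\rangle/|\xi|$ or $\langle\xi\rangle$ through the symbol.

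For the Duhamel contributions I would use Lemma \ref{lem:2.3}. The kernel $m_1$ appearing in the $\mathfrak{F}_1$-Duhamel satisfies the hypothesis of \eqref{eq:2.6} uniformly in $\xi$; applying \eqref{eq:2.6} to $u=G$ with $\rho=\infty$ and then with $\rho=2$, and commuting $\Lambda$ through $m_1(t,D)$ (both being Fourier multipliers, so $\|\int m_1 G\|_{L^\infty_t\dot H^1}=\|\int m_1\Lambda G\|_{L^\infty_t L^2}$ and $\|\int m_1 G\|_{L^2_t\dot H^2}=\|\int m_1\Lambda G\|_{L^2_t\dot H^1}$), supplies both the $L^\infty_t L^2$ and $L^2_t\dot H^1$ bounds in terms of $\|G\|_{L^2_t L^2}$. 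The $L^1_t\dot H^{7/4}_\psi$ contribution is obtained from the same high-frequency computation $|\xi|^{7/4}\int_0^\infty|m_1|\,d\tau\lesssim|\xi|^{-1/4}$ together with Young's $L^1*L^1\to L^1$ inequality applied to $|\xi|\widehat G(\cdot,\xi)$, which is precisely what produces the $\|G\|_{L^1_t\dot H^1}$ term. The $\mathfrak{F}_2$-Duhamel $-2\int_0^t m_2(t-\tau,D)G\,d\tau$ is split by frequency: on $|\xi|<2$, $|m_2|\lesssim|\xi|e^{-ct|\xi|^2}$ fits the parabolic template of \eqref{eq:2.6} and contributes $\|G\|_{L^2_t L^2}$; on $|\xi|\ge 1$, $|m_2|\lesssim e^{-ct}$ fits the template of \eqref{eq:2.7}, and applying \eqref{eq:2.7} to $u=\Lambda G$ with $r=1$ and $\rho_1\in\{\infty,2\}$ produces the $\|G\|_{L^1_t\dot H^1}$ term.

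The main obstacle will be the dissipative high-frequency component $e^{-ct}/(1+|\xi|^2)$ of $m_1$, which carries no spatial smoothing of its own; the only reason it still fits into a parabolic-type maximal regularity estimate is the $(1+|\xi|^2)^{-1}$ prefactor, which is precisely the structure that Lemma \ref{lem:2.3} is tailored to absorb. A secondary subtlety is continuity of $m_1,m_2$ across the sphere $|\xi|=2$, ensured by \eqref{eq3,10}, which is needed for patching the three frequency regimes seamlessly without a separate treatment of the critical sphere. Finally, the $L^1_t\dot H^{7/4}_\psi$ norm sees only $|\xi|>16$ and is precisely the additional quantity encoding the high-frequency smoothing of $p$ announced in \eqref{smoothing-3}; it is the \emph{new} ingredient that will close the subsequent bilinear estimate for $\|p\nabla q\|_{L^1_t L^2}$.
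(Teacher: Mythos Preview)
Your overall strategy matches the paper's, and the $L^\infty_t$ and $L^2_t$ estimates are handled exactly as in the source. The gap is in the $L^1_t\dot H^{7/4}_\psi$ estimate \eqref{eq:320}. You invoke \eqref{eq:2.5} with $r=1$, but that lemma is stated only for $r\in[2,\infty]$: its proof swaps $L^r_tL^2_\xi$ to $L^2_\xi L^r_t$ via Minkowski, which fails for $r<2$. Consequently, your computation $|\xi|^{7/4}\int_0^\infty|m_1(t,\xi)|\,dt\lesssim|\xi|^{-1/4}$, while correct as a bound on $\sup_\xi|\xi|^{7/4}\|m_1(\cdot,\xi)\|_{L^1_t}$, controls only $\|m_1(t,D)p_0\|_{L^2_\xi L^1_t}$, not the required $\|m_1(t,D)p_0\|_{L^1_t L^2_\xi}$. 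The same problem recurs in your Duhamel treatment: applying Young's $L^1*L^1\to L^1$ inequality to $|\xi|\widehat G(\cdot,\xi)$ for each fixed $\xi$ again yields an $L^2_\xi L^1_t$ bound, and Minkowski goes the wrong way.

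The fix is to reverse the order: take the $L^\infty_\xi$ norm of the symbol \emph{first} (pointwise in $t$) and only then integrate in $t$. Concretely, one needs $\|\,|\xi|^{7/4}\psi(\xi)m_1(t,\xi)\,\|_{L^1_tL^\infty_\xi}<\infty$, i.e.\ the hypothesis of \eqref{eq:2.4} with $r=1$ rather than \eqref{eq:2.5}. This is exactly what the paper does: it splits $\int_0^1+\int_1^\infty$, using $\sup_{|\xi|>16}e^{-ct|\xi|^2}|\xi|^{7/4}\lesssim t^{-7/8}$ on $[0,1]$ and the exponential damping $e^{-ct}$ on $[1,\infty)$, so that the $L^2_\xi$ norm can be taken inside before integrating in $t$. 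For the Duhamel term $I_{23}$ the paper likewise first applies Minkowski in $\tau$ and the $L^\infty_\xi$ bound $|m_1(t-\tau,\xi)||\xi|^{11/4}\lesssim(t-\tau)^{-11/8}$ (together with the $|\xi|^{7/4}$ version) to produce a scalar convolution kernel in $t$ that is integrable, then uses Fubini and $L^1_t$ of $\|G(\tau)\|_{\dot H^1}$. Once you switch to this order of operations the argument closes with no further changes.
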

 \begin{proof}
   In order to prove \eqref{eq3,18}--\eqref{eq:320}, from
   \eqref{eq:315}--\eqref{eq:317} we observe that we have to
   establish several estimates whose proof will be divided into three parts.

   \noindent \text{Part I}. \ {\it Estimate of
   $\|\mathfrak{F}({p,q})\|_{L^\infty_tL^2\times{L^\infty_tH^1}}$}.

   First, we derive the estimate for $\mathfrak{F_1}({p,q})$ defined in \eqref{eq:314} \eqref{eq:316}.

   Noticing that any $L^\infty_\xi$ function $m(\xi)$ is an $H^s$ (or $\dot{H}^s$) Fourier multiplier which means that
   for any $H^s$ (or $\dot{H}^s$) function $f(x)$ (or $g$), there hold
   \begin{align}\label{eq:321}
   \left\{\begin{aligned}&\|m(D){f}\|_{H^s}=\|\langle\cdot\rangle^sm(\cdot)\widehat{f}(\cdot)\|_{L^2_\xi}\lesssim\|m\|_{L^\infty_\xi}\|f\|_{H^s},\\
   &\|m(D){g}\|_{\dot{H}^s}=\||\cdot\!|^sm(\cdot)\widehat{g}(\cdot)\|_{L^2_\xi}\lesssim\|m\|_{L^\infty_\xi}\|g\|_{\dot{H}^s},
   \end{aligned}\right.
   \end{align}
   where $\|\langle\cdot\rangle^{s}\widehat{f}(\cdot)\|_{L^2_\xi}=\|f\|_{H^s}$
   and
   $\||\!\cdot\!|^s\widehat{f}(\cdot)\|_{L^2_\xi}=\|f\|_{\dot{H}^s}$.

   For $m_1(t,D)$ and $m_2(t,D)\Lambda^{-1}$, from \eqref{eq:m1}--\eqref{eq:m3} and a simple
   calculation, we have that $m_{1}(t,\xi),\frac{2m_{2}(t,\xi)}{|\xi|}\in L^\infty_tL^\infty_\xi$. Hence by applying
   \eqref{eq:2.4} with $r=\infty$ and $s=0$ to $m_1(t,D)p_0+2m_2(t,D)\Lambda^{-1}q_0$, we get
  \begin{align}\label{eq322}
        \|m_1(t,D)p_0+2m_{2}(t,D)\Lambda^{-1}q_0\|_{L^\infty_tL^2}
       &\lesssim\|p_0\|_{L^2}+\|q_0\|_{L^2}.
   \end{align}
  As for $m_1(t,D)+2m_2(t,D)$ and $m_2(t,D)\Lambda^{-1}\langle\Lambda\rangle$, from
  \eqref{eq:m1}--\eqref{eq:m3} and \eqref{eq322}, we observe that $m_{1}(t,\xi)+2m_2(t,\xi),\frac{\langle\xi\rangle{m}_{2}(t,\xi)}{|\xi|}\in L^{\!\infty}_tL^{\!\infty}_\xi$.
   Hence  applying
   \eqref{eq:2.4}  with $r=\infty$ and $s=1$ to $m_2(t,D)\Lambda^{-1}p_0+(m_1(t,D)+2m_2(t,D))q_0$, we get
  \begin{align}\label{eq323}
      &\|-2m_2(t,D)\Lambda^{-1}p_0+(m_1(t,D)+2m_2(t,D))q_0\|_{L^\infty_tH^1}\nonumber\\
     &\lesssim \|2m_2(t,D)\Lambda^{-1}\langle\Lambda\rangle p_0\|_{L^\infty_tL^2}
       + \|(m_1(t,D)+2m_2(t,D)){q}_0\|_{L^\infty_tH^1}\nonumber\\
     &\lesssim\|p_0\|_{L^2}+\|q_0\|_{H^1}.
   \end{align}

\noindent  Then we deal with the third term of \eqref{eq:314}.
Applying \eqref{eq:m1}--\eqref{eq:m3} and  \eqref{eq:2.7} with
$r=2$, $\rho=\infty$, $s=1$ and
  $m(t,\xi)=m_1(t,\xi)$ to $G$, we get
  \begin{align}\label{eq324}
     \|\int_0^t\!m_1(t\!-\!\tau,D)\Lambda G(\tau)d\tau\|_{L^\infty_tL^2}
     & =         \int_0^t\!m_1(t\!-\!\tau,D)G(\tau)d\tau\|_{L^\infty_t\dot{H}^1}\nonumber\\
     & \lesssim \|G\|_{L^2_tL^2}.
   \end{align}
  It remains to derive the estimate for $\mathfrak{F_2}({p,q})$ defined in \eqref{eq:315} \eqref{eq:316}. By partition of unit, we
  have $G=G^l+G^m+G^h$. Then from  Lemma \ref{lem2-1}, we get
   \begin{align*}
   & \|\int_0^t\!m_2(t-\tau,D)G(\tau)d\tau \|_{L^\infty_tH^1}
                      = \|\int_0^t\!m_2(t-\tau,D)\langle\Lambda\rangle G(\tau)d\tau\|_{L^\infty_tL^2}\\
   &\le\|\int_0^t(m_2(t-\tau,D)\Lambda^{-1})\Lambda\langle\Lambda\rangle(G^l+G^m)d\tau
     + m_2(t-\tau,D)\langle\Lambda\rangle{G}^hd\tau\|_{L^\infty_tL^2}\\
                     &\le\|\!\int_0^t\!\!m_2(t-\tau,D)\Lambda^{-1}\langle\Lambda\rangle({G}^l\!+\!{G}^m)d\tau\|_{L^\infty_t\dot{H}^1} +
                       \|\!\int_0^t\!\!m_2(t-\tau,D){G}^hd\tau\|_{L^\infty_tH^1}\nonumber\\
                     &:=I_{11}+I_{12}.
   \end{align*}
  Applying \eqref{eq:m1}--\eqref{eq:m3}, \eqref{eq:2.7}  and Bernstein inequalities to $I_{11}$ and $I_{12}$, we get
    \begin{align}\label{eq325}
   I_{11}& = \|\int_0^tm_2(t-\tau,D)\Lambda^{-1}\langle\Lambda\rangle({G}^l+{G}^m)d\tau\|_{L^\infty_t\dot{H}^1}\nonumber\\
         & \le\|\langle\Lambda\rangle({G}^l+{G}^m)\|_{L^2_tL^2}\nonumber\\
         & \lesssim \|\langle\cdot\rangle\eta(\cdot)+\langle\cdot\rangle\varphi(\cdot)\|_{L^\infty_\xi}\|G\|_{L^2_tL^2}\nonumber\\
         & \lesssim\|G\|_{L^2_tL^2}
   \end{align}
 and
    \begin{align}\label{eq326}
   I_{12}
     &  =\|\int_0^tm_2(t-\tau,D)\langle\Lambda\rangle{G}^hd\tau\|_{L^\infty_tL^2}\hskip1.95cm\nonumber\\
     & \le\|\Lambda^{-1}\langle\Lambda\rangle\psi(D){G}\|_{L^{1}_t\dot{H}^1}\nonumber\\
     & \le \|\||\cdot|^{-1}\langle\cdot\rangle\psi(\cdot)\|_{L^\infty_\xi}\|G\|_{\dot{H}^1}\|_{L^{1}_t}\nonumber\\
     & \lesssim\|G\|_{L^1_t\dot{H}^1}
   \end{align}
 where in \eqref{eq326} we have used the fact that $0\le\frac{\langle\xi\rangle\psi(\xi)}{|\xi|}\le 2$.

 \noindent \text{Part II}. {\it Estimate of $\|\mathfrak{F}({p,q})\|_{L^2_t\dot{H}^1\times{L^2_t\dot{H}^1}}$.}

We first derive the estimate for $\mathfrak{F_1}({p,q})$ defined in \eqref{eq:314} \eqref{eq:316}.

For $m_1(t,D)$ and $m_2(t,D)\Lambda^{-1}$, from
\eqref{eq:m1}--\eqref{eq:m3} and a simple
   calculation, we observe that $|\xi|m_{1}(t,\xi)+{2m_{2}(t,\xi)}\in L^\infty_\xi{L}^2_t$. Hence by applying
   \eqref{eq:2.5} with $r=2$ and $s=1$ to $m_1(t,D)p_0+2m_2(t,D)\Lambda^{-1}q_0$, we get
  \begin{align}\label{eq327}
        \|m_1(t,D)p_0+2m_{2}(t,D)\Lambda^{-1}q_0\|_{L^2_t\dot{H}^1}
       &\lesssim\|p_0\|_{L^2}+\|q_0\|_{L^2}.
   \end{align}
  As for $m_1(t,D)+2m_2(t,D)$ and $m_2(t,D)\Lambda^{-1}$, from
  \eqref{eq:m1}--\eqref{eq:m3} and \eqref{eq322}, we observe that $|\xi|m_{1}(t,\xi)\in L^\infty_t{L}^2_{\xi}$
   and $m_{2}(t,\xi)\in L^\infty_{\xi}{L^2_t}$.
   Hence applying \eqref{eq:2.5}
    with $r=2$ and $s=1$ to $m_2(t,D)\Lambda^{-1}p_0+(m_1(t,D)+2m_2(t,D))q_0$, we get
  \begin{align}\label{eq328}
       &\|-2m_2(t,D)\Lambda^{-1}p_0+m_1(t,D)q_0+2m_2(t,D)q_0\|_{L^2_t\dot{H}^1}\nonumber\\
       &\lesssim\|p_0\|_{L^2}+\|q_0\|_{L^2}+\|\Lambda{q}_0\|_{L^2}\nonumber\\
       &\lesssim\|(p_0,q_0)\|_{L^2\times{H}^1}.
   \end{align}
  We first deal with the third term on the r.h.s. of \eqref{eq:314}. Applying \eqref{eq:m1}--\eqref{eq:m3} and  \eqref{eq:2.7}
  with $r=2$, $\rho=2$, $s=2$ and
  $m(t,\xi)=m_1(t,\xi)$ to the term of $G$, we get
  \begin{align}\label{eq329}
      \|\int_0^tm_1(t-\tau,D)\Lambda G(\tau)d\tau\|_{L^2_t\dot{H}^1}
     & =\|\int_0^t\!m_1(t-\tau,D)G(\tau)d\tau\|_{L^2_t\dot{H}^2}\nonumber\\
     & \lesssim\|G\|_{L^2_tL^2}.
   \end{align}
  It remains to derive the estimate for $\mathfrak{F_2}({p,q})$ defined in \eqref{eq:315} \eqref{eq:316}. 
  Using similar ways in proving \eqref{eq325} and \eqref{eq326}, we get
   \begin{align}\label{eq330}
    \|\int_0^t\!m_2(t\!-\!\tau,D)G(\tau)d\tau \|_{L^2_t\dot{H}^1}
     &= \|\int_0^t\!m_2(t\!-\!\tau,D)\Lambda{G}(\tau)d\tau\|_{L^2_tL^2}\nonumber\\
     &\lesssim\|G\|_{L^2_tL^2}+\|G^h\|_{L^{1}_t\dot{H}^1}\nonumber\\
     &\lesssim\|G\|_{L^2_tL^2}+\|G\|_{L^{1}_t\dot{H}^1}.
   \end{align}

 \noindent \text{Part III}. {\it Estimate of $\|\mathfrak{F}_1({p,q})\|_{L^{1}_t\dot{H}^{\frac{7}{4}}_\psi}$.}

 From maximal regularity results, \eqref{eq:314} and \eqref{eq:316}, we observe that
 \begin{align}\label{eq331}
    \|\mathfrak{F}_1(p,q)\|_{L^1_t\dot{H}^{\frac{7}{4}}_\psi}\le
            &\ \|m_1(t,D)p_0\|_{L^1_t\dot{H}^{\frac{7}{4}}_\psi} +2\|m_2(t,D)\Lambda^{-1}q_0\|_{L^1_t\dot{H}^{\frac{7}{4}}_\psi}\nonumber\\
            &+\|\int_0^tm_1(t-\tau)\Lambda{G}(\tau)d\tau\|_{L^1_t\dot{H}^{\frac{7}{4}}_\psi}\nonumber\\
            :=&\ I_{21}+I_{22}+I_{23}.
 \end{align}
 As for $I_{21}$, applying \eqref{eq:m1}--\eqref{eq:m2} and Lemma \ref{lem2-1} to $I_{21}$ with $|\xi|\ge2^4$, we
 claim that
 \begin{align}\label{eq332}
  \hskip1.4cm\, I_{21}&=\|m_{1}(t,D)p_0\|_{L^1_t\dot{H}^{\frac{7}{4}}_\psi}\nonumber\\
         &\lesssim \|e^{-ct|\xi|^2}|\xi|^{\frac{7}{4}}\psi(\xi)\widehat{p}_0\|_{L^1_tL^2_\xi}
  +\|e^{-ct}\psi(\xi)|\xi|^{\frac{7}{4}}\langle\xi\rangle^{-2}\|_{L^1_tL^\infty_\xi}\|\widehat{p}_0\|_{L^2_\xi}\nonumber\\
  &\lesssim \|p_0\|_{L^2}.&
  \end{align}
 In order to show \eqref{eq332}, it suffices to estimate
 $\|e^{-ct|\xi|^2}|\xi|^{\frac{7}{4}}\psi(\xi)\widehat{p}_0\|_{L^1_tL^2_\xi}$
 as follows
 \begin{align*}
 \|e^{-ct|\xi|^2}|\xi|^{\frac{7}{4}}\psi(&\xi)\widehat{p}_0\|_{L^1_tL^2_\xi}
  = \int_0^1\!(\int_{\mathbb{R}^3}e^{-2ct|\xi|^2}|\xi|^{\frac{7}{2}}\psi(\xi)|\widehat{p}_0|^2d\xi)^{\frac{1}{2}}dt\\
  & +  \int_1^\infty(\int_{|\xi|>2^4}e^{-2ct|\xi|^2}|\xi|^{\frac{7}{2}}\psi(\xi)|\widehat{p}_0|^2d\xi)^{\frac{1}{2}}dt \\
  &:= I_{211}+I_{212}.
 \end{align*}
  Applying Plancherel equality to $I_{211}$, we have $\displaystyle{\sup_{\xi\in\mathbb{R}^3}}e^{-2ct|\xi|^2}|\xi|^{\frac{7}{2}}\psi(\xi)\lesssim
  t^{-\frac{7}{4}}$ and
 \begin{align*}
   I_{211} & =   \int_0^1(\int_{\mathbb{R}^3}e^{-2ct|\xi|^2}|\xi|^{\frac{7}{2}}\psi(\xi)|\widehat{p}_0|^2d\xi)^{\frac{1}{2}}dt\quad\quad\;\;\\
           & \lesssim  \int_0^1t^{-\frac{7}{8}}dt\|\widehat{p}_0\|_{L^2_\xi}\\
           &\lesssim \|p_0\|_{L^2}.
 \end{align*}
 For $|\xi|>2^4$ and $t>1$, we get
 $e^{-2ct|\xi|^2}|\xi|^{\frac{7}{2}}\psi(\xi)\le {e}^{-ct}e^{-c|\xi|^2}|\xi|^{\frac{7}{2}}\psi(\xi)\lesssim{e}^{-ct}$
 and
 \begin{align*}
  I_{212} &
  \lesssim\int_1^\infty(\int_{|\xi|>2^4}e^{-ct}e^{-c|\xi|^2}|\xi|^{\frac{7}{2}}\psi(\xi)|\widehat{p}_0|^2d\xi)^{\frac{1}{2}}dt\\
          &\lesssim\int_1^\infty{e}^{-ct}dt\|\widehat{p}_0\|_{L^2_\xi}\\
          &\lesssim\|p_0\|_{L^2}.
 \end{align*}
 Similarly, applying \eqref{eq:m1}--\eqref{eq:m2} and Lemma \ref{lem2-1} to $I_{22}$, we get
 \begin{align}\label{eq333}
  \hskip1.3cm I_{22}& =        \|m_{2}(t,D)\Lambda^{-1}q_0\|_{L^1_t\dot{H}^{\frac{7}{4}}_\psi} \lesssim \|e^{-ct}\psi(\xi)\|_{L^1_tL^\infty_\xi}\|q_0\|_{\dot{H}^{\frac{3}{4}}}\nonumber\\
        & \lesssim \|q_0\|_{H^1}.
 \end{align}
 It remains to estimate $I_{23}$. Noticing that the multipliers below can be estimated as follows:
 $m_1(t-\tau,\xi)|\xi|^{\frac{7}{4}}\lesssim(t-\tau)^{-\frac{7}{8}}$ and
 $m_{1}(t-\tau,\xi)|\xi|^{\frac{11}{4}}\lesssim(t-\tau)^{-\frac{11}{8}}$, respectively.
  Hence we get
 \begin{align}\label{eq334}
  \hskip1.4cm I_{23}& = \|\int_0^tm_1(t-\tau)\Lambda{G}(\tau)d\tau\|_{L^{1}_t\dot{H}^{\frac{7}{4}}_\psi}\nonumber\\
        & = \|\int_0^tm_1(t-\tau)\Lambda^{\frac{7}{4}}\Lambda\psi(D){G}(\tau)d\tau\|_{L^{1}_tL^2}\nonumber\\
        & \lesssim \int_0^\infty\!\!\!\int_0^t\!\min\Big\{(t-\tau)^{-\frac{7}{8}}\|\Lambda\psi(D){G}(\tau)\|_{L^2},\;(t-\tau)^{-\frac{11}{4}}\|\psi(D)G\|_{L^2}\Big\}d\tau{dt}\nonumber\\
        & \lesssim \int_0^\infty\!\!\!\int_0^t\!\min\Big\{(t-\tau)^{-\frac{7}{8}},\;(t-\tau)^{-\frac{11}{4}}\Big\}\|{G}(\tau)\|_{\dot{H}^1}d\tau{dt}\nonumber\\
        & \lesssim \int_0^\infty\!\!\!\int_{\tau}^\infty\!\min\Big\{(t-\tau)^{-\frac{7}{8}},\;(t-\tau)^{-\frac{11}{4}}\Big\}dt\|{G}(\tau)\|_{\dot{H}^1}d\tau\nonumber\\
        & \lesssim \int_0^\infty\!\!\!\int_{0}^\infty\!\min\Big\{t^{-\frac{7}{8}},\;t^{-\frac{11}{4}}\Big\}dt\|{G}(\tau)\|_{\dot{H}^1}d\tau\nonumber\\
        & \lesssim \|G\|_{L^1_t\dot{H}^1}
 \end{align}
 where in the fourth inequality we have applied \eqref{eq:2.3} to $\psi(D)G$ with $s=1$ and $a=2$.

 Combining the above arguments, we finish the proof.
\end{proof}

 Recalling that $G=\Lambda^{-1}\nabla\cdot(p\nabla\Lambda^{-1}q)$ and
 Riesz transforms are bounded in $L^2$, thus we need to estimate
 $\|\nabla\cdot(p\nabla\Lambda^{-1}q)\|_{L^1_tL^2}=\|G\|_{L^1_t\dot{H}^1}$.
 The following key lemma is devoted to estimating
 $\|\nabla{p}\cdot\nabla\Lambda^{-1}{q}\|_{L^1_tL^2}$ and $\|p\Lambda{q}\|_{L^1_tL^2}$, where
 \begin{align}\label{eq335}
 \nabla\cdot(p\nabla\Lambda^{\!-1}{q})=\nabla{p}\cdot\nabla\Lambda^{\!-1}{q}-p\,\Lambda{q}.
 \end{align}
\begin{lemma}\label{lem:3.2}
Let $X\times Y$ be  defined in
 \eqref{eq:317}. If $u\in X$ and $v\in Y$, then we get
 \begin{align}\label{eq336}
  &\|u\nabla{v}\|_{L^1_tL^2}+\|{\nabla}uv\|_{L^1_tL^2} \lesssim \|u\|_{L^2_t\dot{H}^1}\|v\|_{L^2_t\dot{H}^1}
   +\|u\|_{L^1_t\dot{H}^{\frac{7}{4}}_\psi}\|v\|_{L^\infty_t{H}^1}, \\
 \label{eq337}
  &\|uv\|_{L^2_tL^2}\lesssim \|u\|_{L^2_t\dot{H}^1}\|v\|_{L^\infty_tH^1}.
\end{align}
\end{lemma}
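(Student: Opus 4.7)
The plan is to reduce both inequalities to combinations of H\"older's inequality, Sobolev embeddings in $\mathbb{R}^3$, and the Bernstein-type estimates supplied by Lemma \ref{lem2-1}. The main difficulty concentrates on \eqref{eq336}, which we handle by a frequency decomposition of $u$ only, writing $u=u^{lm}+u^h$ with $u^{lm}=(\eta+\varphi)(D)u$ (spectrum in $\{|\xi|<2^5\}$) and $u^h=\psi(D)u$ (spectrum in $\{|\xi|>2^4\}$). The low/medium piece will be paid for by $\|u\|_{L^2_t\dot H^1}\|v\|_{L^2_t\dot H^1}$, while the high-frequency piece will be absorbed into $\|u\|_{L^1_t\dot H^{7/4}_\psi}\|v\|_{L^\infty_tH^1}$.

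For \eqref{eq337} we simply write $\|uv\|_{L^2}\le \|u\|_{L^6}\|v\|_{L^3}$, apply the $3$-dimensional Sobolev embeddings $\dot H^1\hookrightarrow L^6$ and $H^1\hookrightarrow L^3$, and close with the time H\"older $L^2_t\cdot L^\infty_t\to L^2_t$. No frequency splitting is needed here.

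For \eqref{eq336} and the low/medium piece, we exploit Bernstein's inequality \eqref{eq:2.1}: since $u^{lm}$ and $\nabla u^{lm}$ have compactly supported spectrum, we obtain $\|u^{lm}\|_{L^\infty}\lesssim \|u^{lm}\|_{L^6}\lesssim \|u\|_{\dot H^1}$ and $\|\nabla u^{lm}\|_{L^3}\lesssim \|\nabla u^{lm}\|_{L^2}\lesssim \|u\|_{\dot H^1}$. The first allows the pairing $\|u^{lm}\nabla v\|_{L^2}\le \|u^{lm}\|_{L^\infty}\|\nabla v\|_{L^2}$, and the second allows $\|\nabla u^{lm}\cdot v\|_{L^2}\le \|\nabla u^{lm}\|_{L^3}\|v\|_{L^6}$ together with $\|v\|_{L^6}\lesssim \|v\|_{\dot H^1}$. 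Cauchy--Schwarz in time ($L^1_t=L^2_t\cdot L^2_t$) then produces the first product on the right of \eqref{eq336}.

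For the high-frequency piece the index $7/4$ is crucial: since $7/4>3/2$, the Littlewood--Paley inequality yields $\|u^h\|_{L^\infty}\lesssim \|u^h\|_{\dot H^{7/4}}=\|u\|_{\dot H^{7/4}_\psi}$, hence $\|u^h\nabla v\|_{L^2}\lesssim \|u\|_{\dot H^{7/4}_\psi}\|v\|_{\dot H^1}$. For $\|\nabla u^h\cdot v\|_{L^2}$ we use instead $L^3\cdot L^6$ in space, combined with $\|\nabla u^h\|_{L^3}\lesssim \|\nabla u^h\|_{\dot H^{1/2}}=\|u^h\|_{\dot H^{3/2}}\lesssim \|u^h\|_{\dot H^{7/4}_\psi}$ (the last step uses that, since $u^h$ has spectrum in $|\xi|>2^4$, larger homogeneous Sobolev norms control smaller ones). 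A time H\"older $L^1_t=L^1_t\cdot L^\infty_t$ then delivers the second product on the right of \eqref{eq336}. The main obstacle is precisely this high-frequency interaction: because $\nabla v$ is controlled only in $L^2$, bounding $\|u^h\nabla v\|_{L^2}$ forces us to put $u^h$ itself into $L^\infty$, which in turn requires a Sobolev exponent strictly larger than $3/2$; the specific choice $s=7/4$ is then dictated by the simultaneous requirement $s<2$ needed elsewhere in the argument (cf.\ the time-integrability $\int_0^1 t^{-7/8}dt<\infty$ used in the proof of Proposition \ref{pro:3.1}).
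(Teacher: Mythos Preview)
Your argument is correct and follows the same route as the paper: split $u=(\eta+\varphi)(D)u+\psi(D)u$, use Bernstein \eqref{eq:2.1} on the compactly supported spectrum to obtain $\|u^{lm}\|_{L^\infty}\lesssim\|u\|_{\dot H^1}$, and use $H^{7/4}\hookrightarrow L^\infty$ on $u^h$ to bring in the $L^1_t\dot H^{7/4}_\psi$ norm; your proof of \eqref{eq337} is identical to the paper's.

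The one point of difference concerns $\|(\nabla u)\,v\|_{L^1_tL^2}$. The paper does \emph{not} frequency-split this term but writes directly $\|\nabla u\|_{L^2_tL^3}\|v\|_{L^2_tL^6}\lesssim\|u\|_{L^2_t\dot H^1}\|v\|_{L^2_t\dot H^1}$; however the passage $\|\nabla u\|_{L^3}\lesssim\|u\|_{\dot H^1}$ is not a valid embedding in $\mathbb{R}^3$ (it would require $u\in\dot H^{3/2}$). Your choice to also decompose this term---Bernstein on $\nabla u^{lm}$ to get $\|\nabla u^{lm}\|_{L^3}\lesssim\|\nabla u\|_{L^2}$, and $\|\nabla u^h\|_{L^3}\lesssim\|u^h\|_{\dot H^{3/2}}\lesssim\|u\|_{\dot H^{7/4}_\psi}$ on the high piece---is therefore more careful and in fact patches a small gap in the paper's write-up, at the cost of using the $\dot H^{7/4}_\psi$ norm in one more place.
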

\begin{proof}
 At first, we prove \eqref{eq336}. Recall that $u{\nabla}v=(u^l+u^m){\nabla}v+u^h{\nabla}v$. By making use of
 H\"{o}lder's inequality, we have
 \begin{align*}
   \|u{\nabla}v\|_{L^1_tL^2}
  & \le\|(u^l+u^m){\nabla}v\|_{L^1_tL^2}+\|u^h{\nabla}v\|_{L^1_tL^2}\\
  & \lesssim \|u^l+u^m\|_{L^{2}_tL^\infty}\|{\nabla}v\|_{L^2_tL^2} +
           \|u^h\|_{L^{1}_tL^{\infty} }\|{\nabla}v\|_{L^{\infty}_tL^2}\\
  :\!&=I_{31}+I_{32}
  \end{align*}
 where from \eqref{eq:2.1} with $|\xi|<2^5$ and Sobolev embedding theorem,
 there holds
 \begin{align}\label{eq:338}
   I_{31}& = \|u^l+u^m\|_{L^{2}_tL^\infty}\|{\nabla}v\|_{L^2_tL^2}\nonumber\\
         & \lesssim\|u^l+u^m\|_{L^{2}_tL^{6}}\|v\|_{L^2_t\dot{H}^1}\nonumber\\
         & \lesssim\|u^l+u^m\|_{L^{2}_t\dot{H}^{1}}\|v\|_{L^2_t\dot{H}^1}\nonumber\\
         & \lesssim\|u\|_{L^{2}_t\dot{H}^{1}}\|v\|_{L^2_t\dot{H}^1}
 \end{align}
 where in the fourth inequality, we used the fact that
 $\eta(\xi)+\varphi(\xi)$ is an $L^2$-multiplier;
 From \eqref{eq:2.3} with $|\xi|>2^4$ and Sobolev embedding theorem $H^{\frac{7}{4}}\hookrightarrow L^\infty$,
 we get
 \begin{align}\label{eq:339}
   I_{32}& = \|u^h\|_{L^{1}_tL^\infty}\|{\nabla}v\|_{L^\infty_tL^2}\nonumber\\
         & \lesssim \|u^h\|_{L^{1}_t{H}^{\frac{7}{4}}}\|v\|_{L^\infty_tH^1}\nonumber\quad\quad\quad\\
          &\lesssim\|u\|_{L^{1}_t\dot{H}^{\frac{7}{4}}_\psi}\|v\|_{L^\infty_tH^1}.
 \end{align}
 Estimate of ${\|\nabla}uv\|_{L^1_tL^2}$ is rather simple. By
 making use of H\"{o}lder's inequality, we get
 \begin{align}\label{eq:3.40}
   \|{\nabla}uv\|_{L^1_tL^2} & \lesssim \|\nabla{u}\|_{L^{2}_tL^{3}}\|v\|_{L^{2}_tL^{6}}\lesssim
   \|{u}\|_{L^2_t\dot{H}^1}\|v\|_{L^{2}_t\dot{H}^{1}}.
 \end{align}
 This proves \eqref{eq336}.

 It remains to prove \eqref{eq337}. By making use of H\"{o}lder's inequality, we get
\begin{align}\label{eq:341}
 \|uv\|_{L^2_tL^2}& \lesssim \|u\|_{L^2_tL^6}\|v\|_{L^\infty_tL^3}\lesssim\|u\|_{L^2_t\dot{H}^1}\|v\|_{L^\infty_tH^{1}}.
  \end{align}
 Finally, combining \eqref{eq:338}--\eqref{eq:341}, we prove all the desired results.
\end{proof}

 Applying \eqref{eq336} and \eqref{eq337} to $\nabla{p}\cdot\nabla\Lambda^{-1}q-p\,\Lambda{q}$ and
 $\Lambda^{-1}\nabla\cdot(p\nabla\Lambda^{-1}q)$, respectively,
   combining Proposition \ref{pro:3.1}, Lemma \ref{lem:3.2} and \eqref{eq:317}, we have the following {\it
a-priori} estimates.

\begin{corollary}\label{cor:3.3}
 Let $(p,q)$ be a solution to system \eqref{eq:1.4} with $(p_0,q_0)\in {L}^2(\mathbb R^3)\times{H}^1(\mathbb R^3)$ and $\mathfrak{F}$ be defined as in \eqref{eq:316}.
 Then there  holds
 \begin{align*}
   &\|\mathfrak{F}({p,q})\|_{X\times Y}
  \lesssim    \|(p_0,q_0)\|_{L^2\times{H}^1} + \|(p,q)\|_{X\times{Y}}^2.
 \end{align*}
 \end{corollary}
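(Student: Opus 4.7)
The plan is to assemble the two ingredients already in place. Proposition \ref{pro:3.1} controls $\|\mathfrak{F}(p,q)\|_{X\times Y}$ in terms of the initial data plus $\|G\|_{L^2_tL^2}+\|G\|_{L^1_t\dot H^1}$, so the entire task reduces to dominating these two norms of the nonlinearity $G=\Lambda^{-1}\nabla\cdot(p\nabla\Lambda^{-1}q)$ by $\|(p,q)\|_{X\times Y}^2$. Lemma \ref{lem:3.2} will handle both, once $G$ is rewritten in a form where $u\nabla v$, $\nabla u\,v$ or $uv$ appears.

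For the $L^2_tL^2$ piece, since $\Lambda^{-1}\nabla$ is a vector of Riesz transforms (bounded on $L^2$), I would first reduce
\begin{align*}
\|G\|_{L^2_tL^2}\lesssim \|p\,\nabla\Lambda^{-1}q\|_{L^2_tL^2}.
\end{align*}
Writing each component of $\nabla\Lambda^{-1}q$ as a Riesz transform $R_i q$, and using that the $R_i$ are bounded on $H^1$, estimate \eqref{eq337} with $u=p$, $v=R_iq$ gives
\begin{align*}
\|p\,\nabla\Lambda^{-1}q\|_{L^2_tL^2}\lesssim \|p\|_{L^2_t\dot H^1}\|q\|_{L^\infty_tH^1}\le \|(p,q)\|_{X\times Y}^2.
\end{align*}

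For the $L^1_t\dot H^1$ piece, the cancellation $\Lambda\Lambda^{-1}=\mathrm{Id}$ yields
\begin{align*}
\|G\|_{L^1_t\dot H^1}=\|\nabla\cdot(p\,\nabla\Lambda^{-1}q)\|_{L^1_tL^2},
\end{align*}
and the identity \eqref{eq335} splits this into $\nabla p\cdot\nabla\Lambda^{-1}q$ and $p\,\Lambda q$. The first sum $\sum_i(\partial_i p)(R_i q)$ fits the $\|\nabla u\,v\|_{L^1_tL^2}$ side of \eqref{eq336} with $u=p$, $v=R_iq$. For the second, I would use $\Lambda q=-\sum_i\partial_i R_iq$ to recast it as $-\sum_i p\,\partial_i R_iq$, a sum of $\|u\nabla v\|_{L^1_tL^2}$ terms with the same $u=p$, $v=R_iq$. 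Invoking \eqref{eq336} and absorbing the Riesz multipliers acting on $q$ into the $\dot H^1$ and $H^1$ norms, one obtains
\begin{align*}
\|\nabla p\cdot\nabla\Lambda^{-1}q\|_{L^1_tL^2}+\|p\,\Lambda q\|_{L^1_tL^2}\lesssim \|p\|_{L^2_t\dot H^1}\|q\|_{L^2_t\dot H^1}+\|p\|_{L^1_t\dot H^{7/4}_\psi}\|q\|_{L^\infty_tH^1},
\end{align*}
which is again bounded by $\|(p,q)\|_{X\times Y}^2$.

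The only real subtlety I expect is bookkeeping: the smoothing norm $\|\cdot\|_{L^1_t\dot H^{7/4}_\psi}$ must appear only on $p$ (which is in $X$) and never on $q$, because the hyperbolic component enjoys no spatial smoothing at high frequencies. This is precisely why \eqref{eq335} is used to trade $\nabla\cdot(p\nabla\Lambda^{-1}q)$ against terms in which every derivative on $q$ is at most first order and can be rerouted through Riesz transforms. Once each bilinear piece has been matched to the correct slot of Lemma \ref{lem:3.2}, adding the resulting bounds to the linear bound of Proposition \ref{pro:3.1} completes the \emph{a priori} estimate.
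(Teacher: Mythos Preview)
Your proposal is correct and follows essentially the same route as the paper: combine Proposition~\ref{pro:3.1} with Lemma~\ref{lem:3.2}, applying \eqref{eq337} to $G=\Lambda^{-1}\nabla\cdot(p\nabla\Lambda^{-1}q)$ for the $L^2_tL^2$ bound and \eqref{eq336} to the decomposition \eqref{eq335} for the $L^1_t\dot H^1$ bound. Your explicit Riesz-transform bookkeeping (writing $\Lambda q=-\sum_i\partial_i R_iq$ and taking $v=R_iq$ componentwise) simply spells out what the paper's one-line appeal to \eqref{eq336}--\eqref{eq337} leaves implicit.
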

\noindent \textbf{Proof of Theorem \ref{thm:1.1}}. Applying Lemma
\ref{lem:2.5}, Corollary \ref{cor:3.3} and following a standard
fixed point argument, we prove Theorem \ref{thm:1.1} provided that $\|(p_0,q_0)\|_{L^2\times{H}^1}$ is small.

 \subsection{Proof of Theorem \ref{thm:1.2}}
 In this subsection,  we first prove the {\it a priori} estimates including the crucial bilinear
 estimates as follows.
 \begin{proposition}\label{pro:3.4}%
  Let $(p,q)$ be a solution to system \eqref{eq:1.4} with $(p_0,q_0)\in H^(\mathbb R^3)2\times
  H^1(\mathbb R^3)$ and $\mathfrak{F}$ be defined as in \eqref{eq:316}. Then there hold
 \begin{align}
        \label{eq342}
  \|\mathfrak{F}({p,q})\|_{L^{\infty}_tH^2\times{L}^{\infty}_tH^1}  \lesssim    \|(p_0,q_0)\|_{H^2\times{H}^1} + \|p\|_{L^\infty_tH^2}\|q\|_{L^\infty_tH^1}.
 \end{align}
 \end{proposition}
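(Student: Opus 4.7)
The plan is to follow the template of Proposition \ref{pro:3.1}, decomposing $\mathfrak{F}=(\mathfrak{F}_1,\mathfrak{F}_2)$ as in \eqref{eq:314}--\eqref{eq:316} and estimating the four data-driven linear terms and the two bilinear integrals separately. Compared to Theorem \ref{thm:1.1}, the target norm $L^\infty_t H^2\times L^\infty_t H^1$ carries no auxiliary parabolic or $\dot H^{7/4}_\psi$ component, so I would replace the smoothing-space bookkeeping used before by the 3D Sobolev algebra structure $H^2(\mathbb R^3)\hookrightarrow L^\infty(\mathbb R^3)$ and the multiplication rule $H^2\cdot H^1\hookrightarrow H^1$.

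For the four linear terms I would observe that the Fourier multipliers $m_1(t,\xi)$, $\langle\xi\rangle m_2(t,\xi)/|\xi|$ and $m_1(t,\xi)+2m_2(t,\xi)$ all lie in $L^\infty_t L^\infty_\xi$. This follows directly from \eqref{eq:m1}--\eqref{eq:m3}: the apparent singularity of $m_2/|\xi|$ at $\xi=0$ is removed by the low-frequency bound $|m_2|/|\xi|\lesssim e^{-ct|\xi|^2}$, and at high frequency $|m_2|\lesssim e^{-ct}$ divided by $|\xi|\ge 2$ remains bounded. Lemma \ref{lem:2.2} with $r=\infty$ then delivers the linear bound $\|p_0\|_{H^2}+\|q_0\|_{H^1}$.

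The bilinear analysis starts from the product estimate
$$\|F\|_{L^\infty_t H^1}\lesssim \|p\|_{L^\infty_t H^2}\|q\|_{L^\infty_t H^1},\qquad F:=p\nabla\Lambda^{-1}q,$$
so that $\Lambda G=\nabla\cdot F$ and $G=\Lambda^{-1}\nabla\cdot F$; this follows from $H^2\cdot H^1\hookrightarrow H^1$ together with the $H^1$-boundedness of the zeroth-order Riesz-type multiplier $\nabla\Lambda^{-1}$. For the $\mathfrak{F}_1$-integral $\int_0^t m_1(t-\tau,D)\nabla\cdot F\,d\tau$ in $L^\infty_t H^2$, I would split by frequency via \eqref{eq:1.5}--\eqref{eq:1.7}: on $\{|\xi|\le 2^5\}$ the multiplier $\langle\xi\rangle^2|\xi|\,m_1$ is uniformly bounded in $\xi$ with exponential $t$-decay, so Minkowski yields a bound by $\|F\|_{L^\infty_t L^2}$. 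On $\{|\xi|>2^4\}$ I would decompose $m_1$ as in \eqref{smoothing-1} into a heat piece $A_1(\xi)\,e^{-c_1 t|\xi|^2}$ and an algebraically-smoothing piece $-A_2(\xi)\,e^{-c_2 t}/|\xi|^2$: the algebraic piece gives the clean pointwise-in-$t$ bound $\|m_1^{\mathrm{alg}}(t-\tau,D)\Lambda G\|_{H^2}\lesssim e^{-c(t-\tau)}\|F\|_{H^1}$, which integrates directly; the heat piece exploits both the divergence structure $\Lambda G=\nabla\cdot F$ and the uniform-in-$\xi$ identity $\int_0^t|\xi|^2 e^{-c(t-\tau)|\xi|^2}\,d\tau\le c^{-1}$ to relocate one spatial derivative onto $F$ before integrating, yielding a bound by $\|\nabla F\|_{L^\infty_t L^2}\lesssim\|F\|_{L^\infty_t H^1}$.

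For the $\mathfrak{F}_2$-integral $2\int_0^t m_2(t-\tau,D)G\,d\tau$ in $L^\infty_t H^1$, the same partition is used. On $\{|\xi|>2\}$ the bound $|m_2|\lesssim e^{-ct}$ integrates to a finite constant in time and yields a bound by $\|G\|_{L^\infty_t H^1}\lesssim\|F\|_{L^\infty_t H^1}$ via $L^2$-boundedness of the Riesz transform. On $\{|\xi|<2\}$ I would combine $\widehat G=(i\xi/|\xi|)\widehat F$ with $|m_2|/|\xi|\lesssim e^{-ct|\xi|^2}$ to derive the pointwise estimate
$$\Big|\int_0^t m_2(t-\tau,\xi)\widehat G(\tau,\xi)\,d\tau\Big|\lesssim \frac{1}{|\xi|}\|\widehat F(\cdot,\xi)\|_{L^\infty_t};$$
after multiplying by $\langle\xi\rangle\lesssim 1$, the low-frequency $H^1$ contribution is dominated by $\|\Lambda^{-1}F\|_{L^\infty_t L^2}$, and the Hardy-Littlewood-Sobolev embedding $L^{6/5}(\mathbb R^3)\hookrightarrow\dot H^{-1}(\mathbb R^3)$ together with H\"older gives $\|\Lambda^{-1}F\|_{L^2}\lesssim\|p\|_{L^3}\|\nabla\Lambda^{-1}q\|_{L^2}\lesssim\|p\|_{H^2}\|q\|_{H^1}$. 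The main technical obstacle is the high-frequency $\mathfrak{F}_1$ estimate, since the heat piece of $m_1$ offers no straightforward $L^\infty_t$-in-time maximal regularity at the $H^2$ output level; the argument must lean on the divergence structure $\Lambda G=\nabla\cdot F$ to convert one spatial derivative into the heat-smoothing identity $\int_0^t|\xi|^2 e^{-c(t-\tau)|\xi|^2}d\tau\le c^{-1}$ before closing via Minkowski.
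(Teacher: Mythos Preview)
Your overall strategy matches the paper's: bound the four linear pieces via the $L^\infty_t L^\infty_\xi$ multiplier bounds on $m_1$, $\langle\xi\rangle m_2/|\xi|$ and $m_1+2m_2$, then treat the two Duhamel integrals using the product rule $H^2\cdot H^1\hookrightarrow H^1$ together with the gain-of-two-derivatives property $\int_0^\infty|\xi|^2|m_1(s,\xi)|\,ds\lesssim 1$. Your $\mathfrak F_2$ treatment (damping $e^{-ct}$ on high frequencies, $\dot H^{-1}$/HLS on low frequencies) is exactly the split the paper uses in \eqref{eq347}--\eqref{eq348}.

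There is, however, a genuine gap in your low-frequency $\mathfrak F_1$ estimate. You assert that on $\{|\xi|\le 2^5\}$ the multiplier $\langle\xi\rangle^2|\xi|\,m_1(t,\xi)$ has \emph{uniform exponential $t$-decay}, and conclude via Minkowski that the Duhamel integral is bounded by $\|F\|_{L^\infty_t L^2}$. This is false near $\xi=0$: by \eqref{eq:m3} one only has $|m_1(t,\xi)|\lesssim e^{-ct|\xi|^2}$ for $|\xi|<2$, so $\langle\xi\rangle^2|\xi|\,m_1\lesssim |\xi|\,e^{-ct|\xi|^2}$, whose supremum over $|\xi|<2$ behaves like $t^{-1/2}$ for large $t$ and is not $L^1_t$. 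Hence the Minkowski step $\int_0^\infty\|\langle\xi\rangle^2|\xi|m_1(s,\cdot)\|_{L^\infty_\xi(|\xi|\le 2^5)}ds<\infty$ fails.

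The paper avoids this by \emph{not} splitting $\mathfrak F_1$ in frequency at all: it uses the uniform bound $|\xi|^2\int_0^\infty|m_1(s,\xi)|\,ds\lesssim 1$ globally in $\xi$, and then controls the $L^2$ part of the $H^2$ norm by $\|G\|_{L^\infty_t\dot H^{-1}}\lesssim\|G\|_{L^\infty_t L^{3/2}}$ (see \eqref{eq345}) and the $\dot H^2$ part by $\|G\|_{L^\infty_t\dot H^1}$ (see \eqref{eq346}). The crucial point is that the $L^2$ piece requires the \emph{negative} Sobolev norm $\dot H^{-1}$ to absorb the slow heat decay at $\xi\to 0$; this is precisely the HLS trick you already invoke for the low-frequency $\mathfrak F_2$ term. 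Transplanting that argument to the $\mathfrak F_1$ low-frequency piece (bound by $\|\Lambda^{-1}F\|_{L^\infty_t L^2}\lesssim\|F\|_{L^\infty_t L^{6/5}}\lesssim\|p\|_{L^3}\|\nabla\Lambda^{-1}q\|_{L^2}$) closes the gap and brings your proof in line with the paper's.
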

 \begin{proof}
 We first derive the estimate for $\mathfrak{F_1}({p,q})$ defined in \eqref{eq:314} \eqref{eq:316}.

 Applying $m_{1}(t,\xi),\frac{m_{2}(t,\xi)}{|\xi|}\in
 L^\infty_tL^\infty_\xi$, $\frac{\langle\xi\rangle\psi(\xi)}{|\xi|}\in L^\infty_\xi$ and \eqref{eq:2.4} with $r=\infty$ and $s=0$ to $m_1(t,D)p_0+2m_2(t,D)\Lambda^{-1}q_0$, we get
  \begin{align}\label{eq343}
        \|m_1(t,D)p_0+2m_{2}(t,D)\Lambda^{-1}q_0\|_{H^2}
       &\lesssim \|m_1(t,\xi)\langle\xi\rangle^2\widehat{p_0}\|_{L^2_\xi}+\|\frac{m_{2}(t,\xi)\langle\xi\rangle^2}{|\xi|}\widehat{q_0}\|_{L^2_\xi}
       \nonumber\\
       &\lesssim\|p_0\|_{H^2}+\|q_0\|_{H^1}.
   \end{align}
  Similarly, noticing that $m_{1}(t,\xi)+2m_2(t,\xi),\frac{\langle\xi\rangle{m}_{2}(t,\xi)}{|\xi|}\in L^{\!\infty}_tL^{\!\infty}_\xi$,
   applying
   \eqref{eq:2.4}  with $r=\infty$ and $s=1$ to $m_2(t,D)\Lambda^{-1}p_0+(m_1(t,D)+2m_2(t,D))q_0$, we get
  \begin{align}\label{eq344}
      &\|2m_2(t,D)\Lambda^{-1}p_0-(m_1(t,D)+2m_2(t,D))q_0\|_{L^\infty_tH^1}\lesssim\|p_0\|_{L^2}+\|q_0\|_{H^1}.
   \end{align}

\noindent  Now we deal with the third term on the r.h.s. of \eqref{eq:314}.
Applying \eqref{eq:m1}--\eqref{eq:m3} and  \eqref{eq:2.7} with
$r=2$, $\rho=\infty$, $s=1$ and
  $m(t,\xi)=m_1(t,\xi)$ to $G$, we get
  \begin{align}\label{eq345}
   \|\int_0^t\!m_1(t\!-\!\tau,D)G(\tau)d\tau\|_{L^\infty_t\dot{H}^1}&\lesssim \|G\|_{L^\infty_t\dot{H}^{-1}}\lesssim \|G\|_{L^\infty_tL^{\frac{3}{2}}}\nonumber\\
   &\lesssim\|p\|_{L^\infty_tH^2}\|q\|_{L^\infty_tH^1}
   \end{align}
  and
  \begin{align}\label{eq346}
   \|\int_0^t\!&m_1(t\!-\!\tau,D)\Lambda{G}(\tau)d\tau\|_{L^\infty_t\dot{H}^2}= \int_0^t\!m_1(t\!-\!\tau,D)G(\tau)d\tau\|_{L^\infty_t\dot{H}^3}\nonumber\\
   &\lesssim  \|G\|_{L^\infty_t\dot{H}^{1}}\lesssim\|\nabla{p}\|_{L^\infty_tL^6}\|q\|_{L^\infty_tL^3}+\|p\|_{L^\infty_tL^\infty}\|\nabla{q}\|_{L^\infty_tL^2}\nonumber
   \\&
   \lesssim\|p\|_{L^\infty_tH^2}\|q\|_{L^\infty_tH^1}.
   \end{align}
  It remains to derive the estimate for $\mathfrak{F_2}({p,q})$ defined in \eqref{eq:315} \eqref{eq:316}.
  Using similar ways in proving \eqref{eq325} and \eqref{eq326},
  we get
  \begin{align}\label{eq347}
    \|\int_0^tm_2(t-\tau,D)\Lambda^{-1}\langle\Lambda\rangle({G}^l+{G}^m)d\tau\|_{L^\infty_t\dot{H}^1}\lesssim\|G\|_{L^\infty_t\dot{H}^{-1}}
    \lesssim\|G\|_{L^\infty_tL^{\frac{3}{2}}}
   \end{align}
 and
    \begin{align}\label{eq348}
  \|\int_0^tm_2(t-\tau,D)\langle\Lambda\rangle{G}^hd\tau\|_{L^\infty_tL^2} \lesssim
  \|G\|_{L^{\infty}_tH^1}
   \end{align}
 where we have used the damping property of $G^h$, i.e., $\psi(\xi)m_2(t,\xi)\lesssim
 e^{-ct}$.

 Combining the above arguments, we finish the proof.
\end{proof}

 The following proposition is used to prove decay estimates of
 solutions to \eqref{eq:1.1}.
 \begin{proposition}\label{pro:3.5}%
  Let $(p,q)$ be a solution to system \eqref{eq:1.4} with $(p_0,q_0)\in H^2(\mathbb R^3)\times
  H^1(\mathbb R^3)$ and $\mathfrak{F}$ be defined as in \eqref{eq:316}. Then there hold
 \begin{align*}
  &(1+t)^{\frac{1}{2}}\|\nabla\mathfrak{F}({p,q})\|_{L^2}+(1+t)^{\frac{7}{8}}\|\Lambda^{\frac{7}{4}}\mathfrak{F}_1(p,q)\|_{L^2} \\
  & \lesssim    \|(p_0,q_0)\|_{H^2\times{H}^1} + \sup_{t>0}\Big((1+t)^{\frac{1}{2}}\|(\nabla p,\nabla q)\|_{L^2\times L^2}\Big)^2+\sup_{t>0}\Big((1+t)^{\frac{7}{8}}\|\Lambda^{\frac{7}{4}}p\|_{L^2}\Big)^2.
 \end{align*}
 \end{proposition}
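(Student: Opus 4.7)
\textbf{Proof plan for Proposition \ref{pro:3.5}.}
The plan is to analyze each of the three terms in the integral representations \eqref{eq:314}--\eqref{eq:315} separately, using the explicit multiplier estimates \eqref{eq3,6}--\eqref{eq3,10} together with the frequency decomposition $(M)$. The proof naturally splits into a linear part (the data terms $m_i(t,D)$ acting on $p_0,q_0$) and a nonlinear part (the Duhamel term involving $G$). For both parts, I would treat the low-frequency piece (where the symbols behave like the heat kernel and furnish polynomial decay) and the medium/high-frequency pieces (where one has an exponential-in-$t$ factor that gives stronger-than-polynomial decay) separately via the partition of unity $\eta+\varphi+\psi=1$ introduced in \eqref{eq:1.5}--\eqref{eq:1.6}.

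For the linear data terms, the desired decay rates follow from symbol-level bounds of the form $\sup_{|\xi|<2}|\xi|^\alpha e^{-ct|\xi|^2}\lesssim t^{-\alpha/2}$ combined with Plancherel, which yields e.g.\ $\|\nabla m_1(t,D)p_0\|_{L^2}\lesssim (1+t)^{-1/2}\|p_0\|_{H^2}$ and $\|\Lambda^{7/4}m_1(t,D)p_0\|_{L^2}\lesssim (1+t)^{-7/8}\|p_0\|_{H^2}$. For the $q_0$ contributions via $m_2(t,D)\Lambda^{-1}$, the dangerous $\Lambda^{-1}$ is absorbed by $m_2$ because on low frequency $|m_2(t,\xi)|/|\xi|\lesssim e^{-ct|\xi|^2}$ by \eqref{eq:m3}, so the combined symbol retains heat-like decay against $\|q_0\|_{L^2}$; on medium and high frequencies the $e^{-ct}$ bounds from \eqref{eq:m1}--\eqref{eq:m2} give exponential decay modulo at most one negative power of $|\xi|$, which is absorbed by $\|q_0\|_{H^1}$.

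For the nonlinear contributions, such as $\int_0^t\nabla m_1(t-\tau,D)\Lambda G(\tau)\,d\tau$ and its $\Lambda^{7/4}$-analogue and the corresponding $\mathfrak{F}_2$ piece, I would use the classical splitting $\int_0^{t/2}+\int_{t/2}^t$. On $[0,t/2]$ the derivatives fall on the kernel through $\|\partial^\alpha m_1(t-\tau,D)f\|_{L^2}\lesssim(t-\tau)^{-|\alpha|/2}\|f\|_{L^2}+e^{-c(t-\tau)}\|f\|_{L^2}$, which integrates against the uniformly-bounded source; on $[t/2,t]$ the kernel norms are bounded and one uses the temporal decay of $G(\tau)$. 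The source $G=\Lambda^{-1}\nabla\cdot(p\nabla\Lambda^{-1}q)$ is controlled, via H\"older, Sobolev embedding and $L^r$ boundedness of Riesz transforms, by products such as $\|p\|_{L^2}\|\nabla q\|_{L^2}$ or $\|\nabla p\|_{L^2}\|q\|_{L^6}+\|p\|_{L^6}\|\nabla q\|_{L^2}$, each of which, by the bootstrap hypotheses together with the uniform $L^\infty_tH^2\times L^\infty_tH^1$ bound supplied by Proposition \ref{pro:3.4}, decays at a sufficient polynomial rate in $\tau$.

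The main obstacle is the weighted estimate $(1+t)^{7/8}\|\Lambda^{7/4}\mathfrak{F}_1(p,q)\|_{L^2}$. Unlike the single-derivative estimate, the $7/4$-derivative lies between the parabolic regularity of $p$ and the non-smoothing hyperbolic regularity of $q$, so one must carefully distribute derivatives in the nonlinearity. The plan is to exploit the high-frequency smoothing of $m_1$ displayed in \eqref{smoothing-3} on the low- and medium-frequency components $G^l+G^m$ of $G$, while estimating the high-frequency component $G^h$ through the assumed bootstrap $\sup_\tau(1+\tau)^{7/8}\|\Lambda^{7/4}p(\tau)\|_{L^2}$ by forcing the $\Lambda^{7/4}$ to fall on $p$ and bounding the remaining $q$ factor in $L^\infty$ via $H^2\hookrightarrow L^\infty$ and Proposition \ref{pro:3.4}. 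A bootstrap closing argument in the same spirit as Corollary \ref{cor:3.3} and the proof of Theorem \ref{thm:1.1} then yields the claimed inequality.
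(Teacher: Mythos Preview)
Your treatment of the linear data terms is essentially the paper's: symbol bounds of heat type on low frequencies and $e^{-ct}$ on high frequencies, giving $(1+t)^{-1/2}$ and $(1+t)^{-7/8}$ via Plancherel. That part is fine.

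The nonlinear part, however, has a real gap. The product bounds you list, $\|\nabla p\|_{L^2}\|q\|_{L^6}+\|p\|_{L^6}\|\nabla q\|_{L^2}$, yield at best a decay of order $(1+\tau)^{-1}$ for the source, and convolving $(t-\tau)^{-1/2}$ or $(t-\tau)^{-7/8}$ against $(1+\tau)^{-1}$ produces a logarithmic loss rather than the claimed $(1+t)^{-1/2}$ or $(1+t)^{-7/8}$. The $\int_0^{t/2}+\int_{t/2}^t$ splitting does not rescue this: on $[0,t/2]$ you would need the kernel alone to supply the full decay against a merely bounded source, which it cannot (two derivatives on $m_1$ give only $(t-\tau)^{-1}$, integrating to $O(1)$). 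The paper avoids this by squeezing out a \emph{strictly integrable} source decay: it estimates $\|\Lambda G\|_{L^2}$ by $\|\nabla p\|_{L^3}\|q\|_{L^6}+\|p\|_{L^\infty}\|\nabla q\|_{L^2}$, and then uses $\|\nabla p\|_{L^3}\lesssim\|\Lambda^{3/2}p\|_{L^2}$ and $\|p\|_{L^\infty}\lesssim\|\Lambda^{7/4}p\|_{L^2}^{6/7}\|p\|_{L^2}^{1/7}$, so that interpolation between the two bootstrap quantities $(1+\tau)^{1/2}\|\nabla p\|_{L^2}$ and $(1+\tau)^{7/8}\|\Lambda^{7/4}p\|_{L^2}$ yields $(1+\tau)^{-3/4}$ for each $p$-factor, hence $(1+\tau)^{-5/4}$ for $\|\Lambda G\|_{L^2}$. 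With this exponent $5/4>1$, the straight convolution $\int_0^t(t-\tau)^{-\alpha}(1+\tau)^{-5/4}d\tau\lesssim(1+t)^{-\alpha}$ closes for both $\alpha=1/2$ and $\alpha=7/8$, with no time-splitting and no frequency decomposition of $G$.

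Separately, your plan for the $(1+t)^{7/8}\|\Lambda^{7/4}\mathfrak F_1\|_{L^2}$ piece cannot work as written: you propose to force $\Lambda^{7/4}$ onto $p$ via Leibniz and bound the remaining $q$-factor in $L^\infty$ ``via $H^2\hookrightarrow L^\infty$'', but $q$ lives only in $H^1(\mathbb R^3)$, which does not embed into $L^\infty$. The paper never puts $\Lambda^{7/4}$ on $G$; it leaves $\Lambda^{7/4}$ on the kernel $m_1$ (producing the integrable $(t-\tau)^{-7/8}$) and keeps only one derivative on $G$, controlled as above.
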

 \begin{proof}
 Noticing that $m_{1}(t,\xi)|\xi|+{m_{2}(t,\xi)}\lesssim
 e^{-ct|\xi|^2}|\xi|+e^{-ct}$, we have
  \begin{align*}
       \|m_1(t,D)\Lambda{p}_0+2m_{2}(t,D)q_0\|_{L^2}
       &\lesssim \|m_1(t,\xi)|\xi|\widehat{p_0}\|_{L^2_\xi}+\|m_{2}(t,\xi)\widehat{q_0}\|_{L^2_\xi}\\
       &\lesssim(1+t)^{-\frac{1}{2}}(\|p_0\|_{H^1}+\|q_0\|_{H^1})
   \end{align*}
 and
    $   \|m_1(t,D)\Lambda^{\frac{7}{4}}{p}_0+2m_{2}(t,D)\Lambda^{\frac{3}{4}}q_0\|_{L^2}
       \lesssim(1+t)^{-\frac{7}{8}}(\|p_0\|_{H^2}+\|q_0\|_{H^1}).
  $
   \;Similarly,
  \begin{align*}
    \|2m_2(t,D)p_0-(m_1(t,D)\Lambda+2m_2(t,D)\Lambda)q_0\|_{L^2}\lesssim(1+t)^{-\frac{1}{2}}(\|p_0\|_{H^1}+\|q_0\|_{H^1}).
   \end{align*}

 \noindent  As for the third term on the r.h.s. of \eqref{eq:314}, by using
 $m_{1}(t,\xi)|\xi|\lesssim e^{-ct|\xi|^2}|\xi|+e^{-ct}$,
 chain rule, Plancherel equality and Sobolev embedding, we get
  \begin{align*}
   &\|\int_0^t\!m_1(t\!-\!\tau,D)\Delta{G}(\tau)d\tau\|_{L^2}\\
   &\lesssim
   \int_0^t(t-\tau)^{-\frac{1}{2}}(1+\tau)^{-\frac{5}{4}}d\tau\sup_{\tau>0}(1+\tau)^{\frac{5}{4}}(\|{\nabla}p\|_{L^3}\|q\|_{L^6}+\|\nabla{q}\|_{L^2}\|p\|_{L^\infty})
   \\&\lesssim
   (1+t)^{-\frac{1}{2}}
   \sup_{\tau>0}\Big((1+\tau)^{\frac{3}{4}}\|\Lambda^{\frac{3}{2}}p\|_{L^2}+(1+\tau)^{\frac{7}{4}}\|\Lambda^{\frac{7}{4}}p\|_{L^2}\Big)(1+\tau)^{\frac{1}{2}}\|\nabla
   q\|_{L^2}.
   \end{align*}
 Similarly, we have
  \begin{align*}
   &\|\int_0^t\!m_1(t\!-\!\tau,D)\Lambda^{\frac{7}{4}}\nabla{G}(\tau)d\tau\|_{L^2}\\
   &\lesssim
   \int_0^t(t-\tau)^{-\frac{7}{8}}(1+\tau)^{-\frac{5}{4}}d\tau\sup_{\tau>0}(1+\tau)^{\frac{5}{4}}(\|{\nabla}p\|_{L^3}\|q\|_{L^6}+\|\nabla{q}\|_{L^2}\|p\|_{L^\infty})
   \\&\lesssim
   (1+t)^{-\frac{7}{8}}
   \sup_{\tau>0}\Big((1+\tau)^{\frac{3}{4}}\|\Lambda^{\frac{3}{2}}p\|_{L^2}+(1+\tau)^{\frac{7}{4}}\|\Lambda^{\frac{7}{4}}p\|_{L^2}\Big)(1+\tau)^{\frac{1}{2}}\|\nabla
   q\|_{L^2},
   \end{align*}
and
   \begin{align*}
    &\|\int_0^tm_2(t-\tau,D)\nabla{G}d\tau\|_{L^2}\\&\lesssim
   \int_0^t(t-\tau)^{-\frac{1}{2}}(1+\tau)^{-\frac{5}{4}}d\tau\sup_{\tau>0}(1+\tau)^{\frac{5}{4}}(\|{\nabla}p\|_{L^3}\|q\|_{L^6}+\|\nabla{q}\|_{L^2}\|p\|_{L^\infty})
   \\&\lesssim
   (1+t)^{-\frac{1}{2}}
   \sup_{\tau>0}\Big((1+\tau)^{\frac{3}{4}}\|\Lambda^{\frac{3}{2}}p\|_{L^2}+(1+\tau)^{\frac{7}{4}}\|\Lambda^{\frac{7}{4}}p\|_{L^2}\Big)(1+\tau)^{\frac{1}{2}}\|\nabla
   q\|_{L^2}.
   \end{align*}
 Combining the above arguments and  $\|\Lambda^{\frac{3}{2}}p\|_{L^2}\lesssim\|\Lambda^{\frac{7}{4}}p\|_{L^2}^\frac{2}{3}\|\nabla
 p\|_{L^2}^{\frac{1}{3}}$,
 we finish the proof.
\end{proof}

\noindent \textbf{Proof of Theorem \ref{thm:1.2}}. Applying Lemma
\ref{lem:2.5}, Propositions \ref{pro:3.4} and \ref{pro:3.5},
following standard fixed point argument, we prove Theorem
\ref{thm:1.2} provides that $\|(p_0,q_0)\|_{L^2\times{H}^1}$ is
small.

 \subsection{Proof of Corollary \ref{cor:1.3}}
 \noindent \textbf{Proof of Corollary \ref{cor:1.3}}. Applying Lemma
 \ref{lem:2.5} and Corollary \ref{cor:3.3} to system \eqref{eq:1.1},
 we prove the existence results of Corollary \ref{cor:1.3}. As
 for the decay property of $v$, we use \eqref{eq:1.14}--\eqref{eq:1.15}. We
 omit the details.

\medskip\medskip
\noindent{\textbf{Acknowledgmens}:}\,
 Chao Deng is supported by PAPD of Jiangsu Higher Education
 Institutions, by JSNU under Grant No. 9212112101, and by the NSFC under Grant No.\! 11171357 \&\! 11271166; he would like to express his gratitude to Professor
 Congming Li's invitation to the Colorado University at Boulder where part of this
 work was done.
 Tong Li would like to thank Congming Li for his friendship.

\bibliographystyle{amsplain}
\providecommand{\bysame}{\leavevmode\hbox
to3em{\hrulefill}\thinspace}
\providecommand{\MR}{\relax\ifhmode\unskip\space\fi MR }
\providecommand{\MRhref}[2]{%
  \href{http://www.ams.org/mathscinet-getitem?mr=#1}{#2}} \providecommand{\href}[2]{#2}

\medskip\medskip\medskip\medskip\medskip\medskip\medskip
\halign{\hskip 2.4 in  {\small\text{#}}  &    \;\;\;  {\small #}
\hfill \cr
 & Chao Deng   \cr
 & Math Department, Jiangsu Normal University,  \cr
 &Xuzhou, Jiangsu 221116, China  \cr
 &Email: deng315@yahoo.com.cn  \cr\cr
 & Tong Li  \cr
 & Math Department, University of
 Iowa,  \ \cr
 &  Iowa city, IA
52242   \cr
 &  Email: tong-li@uiowa.edu \cr
 }

\end{document}